\documentclass[11pt, oneside]{amsart}   	
\usepackage[utf8]{inputenc}
\usepackage{geometry}                		
\geometry{letterpaper}                   		
\usepackage[parfill]{parskip}    		
\usepackage{graphicx}				
\usepackage{amsmath}
\usepackage{amssymb}
\usepackage{amsthm}

\usepackage{dsfont}

\usepackage{esint}

\usepackage{mathrsfs}

\usepackage[english]{babel}
\usepackage{fancyhdr}

\setlength{\parindent}{15pt}

\theoremstyle{theorem}
\newtheorem{theorem}{Theorem}[section]

\theoremstyle{definition}
\newtheorem{definition}[theorem]{Definition}

\theoremstyle{definition}

\theoremstyle{theorem}
\newtheorem*{theorem*}{Theorem}

\theoremstyle{theorem}
\newtheorem{lemma}[theorem]{Lemma}

\theoremstyle{theorem}

\theoremstyle{theorem}
\newtheorem{conjecture}[theorem]{Conjecture}

\theoremstyle{corollary}
\newtheorem{corollary}[theorem]{Corollary}

\theoremstyle{remark}
\newtheorem{remark}[theorem]{Remark}

\theoremstyle{remark}

\usepackage{polynom}

\newcommand{\pder}[2]{\frac{\partial#1}{\partial#2}}

\DeclareMathOperator{\Hom}{Hom}

\DeclareMathOperator{\Ext}{Ext}

\usepackage{tikz-cd}
\usetikzlibrary{matrix,arrows,decorations.pathmorphing}
\title[Left-Orderable Surgeries on the knot $6_2$]{Left-Orderable Surgeries on the knot $6_2$ via hyperbolic $\widetilde{PSL}(2,\mathbb{R})$-Representations}
\author{Ollie Thakar}
\address{\parbox{\linewidth}{Department of Mathematics, Princeton University, New Jersey, 08540 \\ \emph{Current Address:} Department of Mathematics, Harvard University, Massachusetts, 02138}}
\email{othakar@math.harvard.edu}

\begin{document}

\maketitle\begin{abstract} We introduce a new method of detecting when the fundamental group of a Dehn surgery on a knot admits a left-ordering, a method which is particularly useful for 2-bridge knots. As an illustration of this method, we show that all Dehn surgeries on the knot $6_2$ with slope in the interval $(-4, 8)\cap\mathbb{Q}$ have left-orderable fundamental groups by exhibiting a family of hyperbolic $\widetilde{PSL}(2,\mathbb{R})$-representations of the knot complement group.
\end{abstract}

\section{Introduction}

The $L$-space conjecture is an ambitious conjecture in 3-manifold topology attempting to unite information about the Heegaard Floer homology of a three-manifold with information about its fundamental group. To state this conjecture, we must first go through some preliminary definitions:

\begin{definition}
A group $G$ is said to be \emph{left-orderable} if it admits a total ordering $<$ such that if $a<b$ then $ca<cb$ for all $a,b,c\in G.$
\end{definition} 

\begin{definition}
A rational homology 3-sphere $Y$ is said to be an \emph{$L$-space} if $$\text{rank} ~\widehat{HF}(Y) = |H_1(Y; \mathbb{Z})|,$$ where $\widehat{HF}(Y)$ is the hat-flavored Heegaard Floer homology of $Y$ as defined in \cite{OS04a}.
\end{definition}

With these preliminaries in place, we may state the conjecture:

\begin{conjecture}[The $L$-Space Conjecture, \cite{BGW}]
Let $Y$ be a rational homology 3-sphere. Then, the following are equivalent:
\begin{enumerate}
	\item $Y$ is an $L$-space.
	\item $\pi_1(Y)$ is not left-orderable.
	\item $Y$ does not admit a taut foliation.
\end{enumerate}
\end{conjecture}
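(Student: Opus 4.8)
The statement is a three-way equivalence, so the natural plan is to close a cycle of implications, say $(1)\Rightarrow(3)\Rightarrow(2)\Rightarrow(1)$, so that each arrow transports information in a single direction between the three theories involved: the Heegaard Floer homology $\widehat{HF}$, the existence of taut foliations, and left-orderings of $\pi_1$. I should say plainly at the outset, however, that this is an open conjecture: no proof of the full cycle is known, the three invariants are of genuinely different natures, and what follows is the state of the art together with the one step that can be carried out unconditionally. Of the six pairwise implications, exactly one is a theorem in complete generality, a second group is known only for restricted classes of $Y$, and the arrows touching left-orderability are the true obstruction.

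The single arrow I can supply in full is $(1)\Rightarrow(3)$, which I would handle in contrapositive form: a coorientable taut foliation $\mathcal{F}$ on $Y$ obstructs $Y$ from being an $L$-space. First I would perturb $\mathcal{F}$, via Eliashberg--Thurston (with the $C^0$ refinements of Kazez--Roberts and Bowden), into a weakly symplectically fillable contact structure $\xi$. The existence of such a filling, through the nonvanishing of the Ozsv\'ath--Szab\'o contact invariant and the behavior of $\widehat{HF}$ under symplectic cobordism, forces $\rank \widehat{HF}(Y) > |H_1(Y;\mathbb{Z})|$, so $Y$ is not an $L$-space. This direction is self-contained modulo the cited Floer-theoretic inputs and is the only one I expect to close without new ideas.

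For the remaining arrows I would be forced into special classes of $Y$. When $Y$ is Seifert fibered or, more generally, a graph manifold, the entire equivalence is already a theorem: left-orderability is governed by the base orbifold and the horizontal-foliation obstruction, and these can be matched against the $L$-space condition read off from the plumbing, by work of Boyer--Gordon--Watson, Boyer--Clay, and Hanselman--Rasmussen--Rasmussen--Watson. I would dispose of these cases by citation. For the hyperbolic pieces, by contrast, there is no structural dictionary among $\widehat{HF}$, foliations, and orderings, and the only effective leverage on left-orderability is to produce orderings by hand.

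The hard part, and precisely the reason the conjecture remains open, is this interface with left-orderability in the hyperbolic case: both $(3)\Rightarrow(2)$ and $(2)\Rightarrow(1)$ require reading a global coherence property of $\pi_1(Y)$ off of data --- a foliation, or a Floer group --- in which it is invisible, and no general technique exists for either. In the absence of such a technique, the realistic program, and the one this paper carries out, is to confirm the conjecture's prediction on new families rather than to prove the implications abstractly: for manifolds already known to be non-$L$-spaces, one checks directly that $\pi_1$ is left-orderable. Concretely, I would realize $\pi_1(Y)$ faithfully inside $\widetilde{PSL}(2,\mathbb{R})$, whose action on $\mathbb{R}$ (as the universal cover of the circle action of $PSL(2,\mathbb{R})$) is by orientation-preserving homeomorphisms, so that a faithful representation renders $\pi_1(Y)$ left-orderable; I would then apply this to the Dehn fillings of the complement of $6_2$ with slope in $(-4,8)\cap\mathbb{Q}$.
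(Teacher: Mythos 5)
You are right not to attempt a proof: this statement is the $L$-space conjecture itself, and the paper offers no proof of it --- it is stated precisely as an open conjecture, with only the implication (1)$\Rightarrow$(3) known in general (Ozsv\'ath--Szab\'o) and the full equivalence known for restricted classes such as Seifert-fibered and graph manifolds. Your sketch of (1)$\Rightarrow$(3) via Eliashberg--Thurston perturbation of a coorientable taut foliation to a weakly fillable contact structure, the nonvanishing contact invariant, and the resulting $\rank\widehat{HF}(Y)>|H_1(Y;\mathbb{Z})|$ is the standard and correct route, matching the paper's citation, and your survey of the Seifert-fibered/graph-manifold cases and of the genuine obstruction at the left-orderability interface is accurate.

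One substantive correction to your description of the paper's strategy: you say one should realize $\pi_1(Y)$ \emph{faithfully} inside $\widetilde{PSL}(2,\mathbb{R})$. Faithfulness would indeed suffice (subgroups of left-orderable groups are left-orderable), but it is an unnecessarily strong demand and is not what the paper does --- producing faithful representations of surgered manifolds is generally out of reach. The paper instead produces merely \emph{nontrivial} (non-abelian) representations and then invokes the Boyer--Rolfsen--Wiest theorem: for a compact, orientable, \emph{irreducible} 3-manifold, any nontrivial left-orderable quotient of $\pi_1$ already forces $\pi_1$ to be left-orderable. This is why irreducibility of the surgeries (Hatcher--Thurston for non-torus 2-bridge knots) appears as a hypothesis throughout the paper, and why the lifting problem from $SL(2,\mathbb{R})$ to $\widetilde{PSL}(2,\mathbb{R})$ --- which your proposal passes over --- is the actual technical heart of the verification for $6_2$.
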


The equivalence of (1) and (2) was first conjectured by Boyer, Rolfsen, and Wiest, while the equivalence of those conditions with (3) was added by Juhasz. The implication (1)$\implies$(3) was shown by Ozsv\'ath and Szab\'o in \cite{OSgenus}, but all the other equivalences remain open.

This paper will primarily focus on the implication (1)$\iff$(2) for the case that $Y$ is a Dehn surgery on a knot. To understand when surgeries on knots are $L$-spaces, we make the following definition:

\begin{definition}
A knot $K\subset S^3$ is an $L$-space knot if there exists some positive slope $r\in\mathbb{Q}$ such that $S^3_r(K)$ is an $L$-space.
\end{definition}

This definition is particularly useful to us on account of the following theorem stated in work of Hedden and Ozsv\'ath-Szab\'o:

\begin{theorem}[\cite{hedden} Lemma 2.13, \cite{OSrationalsurgeries} Proposition 9.5]
Let $K\subset S^3$ be an $L$-space knot, and let $g(K)$ be the Seifert genus of $K$. Then, the surgery $S^3_r(K)$ is an $L$-space if and only if $r\geq 2g(K) - 1.$
\end{theorem}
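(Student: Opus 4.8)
The plan is to reduce everything to Ozsv\'ath--Szab\'o's rational surgery (mapping cone) formula, whose only substantive input for us is the rigidity of the knot Floer complex of an L-space knot. I would first record the structural consequence of $K$ being an L-space knot: since $S^3_{r_0}(K)$ is an L-space for some $r_0>0$, the mapping cone computing its Heegaard Floer homology is forced to have minimal homology, and this rigidity propagates to show that the stable complexes $A_s^+:=H_*\bigl(C\{\max(i,j-s)\geq 0\}\bigr)$ are each isomorphic to a single tower $\mathcal{T}^+=\mathbb{Z}[U,U^{-1}]/U\cdot\mathbb{Z}[U]$, equivalently that $\widehat{HFK}(K,j)\in\{0,\mathbb{Z}\}$ and $CFK^\infty$ is a staircase. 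From this I would extract the behaviour of the two structure maps $v_s^+,h_s^+\colon A_s^+\to\mathcal{T}^+$: each is multiplication by a power of $U$, say $U^{V_s}$ and $U^{H_s}$, where $V_s,H_s\geq 0$, $H_s=V_{-s}$, the sequence $V_s$ is non-increasing with consecutive differences in $\{0,1\}$, and $V_s=0$ exactly when $s\geq g(K)$ (indeed $V_s$ equals the torsion coefficient $t_s(K)$ of the Alexander polynomial, which vanishes precisely for $s\geq g$).

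With this in hand the ``if'' direction for \emph{integer} slopes is immediate: for $n\geq 2g(K)-1$ the large surgery formula identifies $\widehat{HF}(S^3_n(K),[s])$ with $H_*(\hat{A}_s)\cong\mathbb{Z}$ in each of the $n$ $\mathrm{spin}^c$ structures $[s]$, so $\operatorname{rank}\widehat{HF}(S^3_n(K))=n=|H_1(S^3_n(K);\mathbb{Z})|$ and the surgery is an L-space. For a general positive rational slope $p/q\geq 2g-1$ the large surgery formula no longer applies, so I would pass to the full rational mapping cone $\mathbb{X}^+(p/q)$, in which copies of $A^+_{\lfloor i/q\rfloor}$ are joined to towers $B^+\cong\mathcal{T}^+$ by the maps $v^+$ and $h^+$; the goal is to show that, within each $\mathrm{spin}^c$ structure, the arrangement of towers dictated by the $V_s,H_s$ leaves the reduced homology $HF^+_{\mathrm{red}}$ trivial, hence an L-space.

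For the ``only if'' direction I would show that for every slope $p/q<2g-1$ (in particular every non-positive slope) the cone carries surviving reduced homology. The mechanism is the competition between the range $s<g$ on which $v_s^+$ fails to be an isomorphism and the range $s>-g$ on which $h_s^+$ fails to be an isomorphism: once the slope drops below $2g-1$ these defect ranges overlap enough that some tower contributes an uncancelled summand to $HF^+_{\mathrm{red}}$, obstructing the L-space property.

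The main obstacle is precisely this bookkeeping in the last two steps. Computing the homology of $\mathbb{X}^+(p/q)$ requires tracking, $\mathrm{spin}^c$ structure by $\mathrm{spin}^c$ structure, which of the $v^+$ and $h^+$ maps the staircase forces to be isomorphisms and which not, and then verifying that every uncancelled contribution disappears exactly when $p/q\geq 2g-1$ while at least one survives for every smaller slope. Extracting the sharp threshold $2g-1$ from the interaction of the staircase data $\{V_s\}$ with the arithmetic of the index $\lfloor i/q\rfloor$ in the rational surgery formula is the delicate heart of the argument; everything else is formal once the rigidity of $CFK^\infty$ for L-space knots is established.
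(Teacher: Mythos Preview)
The paper does not prove this theorem; it states it as background and defers entirely to the cited references (Hedden, Lemma~2.13, and Ozsv\'ath--Szab\'o, Proposition~9.5). There is therefore no in-paper proof to compare against.

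That said, your outline is the correct strategy and is essentially what those references do: the rigidity of $CFK^\infty$ for an $L$-space knot (the staircase form, so that each $A_s^+$ is a single tower and the maps $v_s^+,h_s^+$ are multiplication by $U^{V_s},U^{H_s}$ with $V_s=t_s(K)$ vanishing precisely for $s\geq g$) feeds into the rational surgery mapping cone, and the threshold $2g-1$ emerges from exactly the overlap of defect ranges you describe. You have also correctly located where the real content lies---the $\mathrm{spin}^c$-by-$\mathrm{spin}^c$ analysis of which towers survive---and candidly flagged it as unfinished. As a plan this is sound; as a proof it stops short of the computation that actually pins down the threshold, which is precisely the work carried out in the cited sources. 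Since the paper itself makes no attempt at that computation either, your proposal goes further than the paper does, but it is an outline rather than a proof.
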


We now wish to see examples of this definition. Since lens spaces are $L$-spaces and every positive torus knot admits a positive surgery to an $L$-space (see \cite{Moser}), this tells us that positive torus knots are $L$-space knots. In fact, the implication (1)$\iff$(2) is known for all torus knots, and it follows from this above theorem and the proof of (1)$\iff$(2) for Seifert-fibered spaces in \cite{LS} and \cite{BGW}, since every nonzero surgery on a torus knot is Seifert-fibered.

Among alternating knots, in particular 2-bridge knots, torus knots are the only examples of $L$-space knots by \cite[Theorem 1.5]{OSlens}. Hence, all surgeries on non-torus 2-bridge knots are non-$L$-spaces (since 2-bridge knots are closed under taking mirrors and a negative surgery on a knot is homeomorphic to positive surgery on its mirror.) In particular, if we wish to prove the $L$-space conjecture for 2-bridge knots, we hope to show that all nonzero surgeries on non-torus 2-bridge knots have left-orderable fundamental group. 

A powerful and common technique is to use the following theorem of Boyer, Rolfsen, and Wiest.

\begin{theorem}[\cite{BRW} Theorem 1.1.1, Theorem 3.2]
Let $M$ be a compact, irreducible, orientable 3-manifold (possibly with boundary.) Then, $\pi_1(M)$ is left-orderable if and only if $\pi_1(M)$ admits a nontrivial left-orderable quotient group.
\end{theorem}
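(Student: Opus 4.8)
The plan is to prove the two implications separately, with essentially all of the content living in the reverse direction. The forward implication is immediate: in the relevant case $M$ is irreducible with nontrivial (indeed infinite) fundamental group, so if $\pi_1(M)$ is left-orderable, then it is a nontrivial left-orderable quotient of itself via the identity map. I would therefore concentrate on showing that the existence of a nontrivial left-orderable quotient forces $\pi_1(M)$ itself to be left-orderable.

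The first move is to pass to the dynamical picture. Since $M$ is compact, $G := \pi_1(M)$ is finitely generated, hence countable, and a countable group is left-orderable if and only if it admits a faithful orientation-preserving action on the line, i.e. an embedding into $\mathrm{Homeo}^+(\mathbb{R})$. Given a surjection $\phi\colon G \twoheadrightarrow L$ onto a nontrivial left-orderable group $L$, I would fix an embedding $L \hookrightarrow \mathrm{Homeo}^+(\mathbb{R})$ and precompose with $\phi$ to obtain a nontrivial, but generally non-faithful, action of $G$ on $\mathbb{R}$ whose kernel is $K = \ker\phi$. The task thus becomes upgrading this action to a faithful one.

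Next I would reduce the problem to left-ordering the kernel. The class of left-orderable groups is closed under extensions: given $1 \to K \to G \xrightarrow{\phi} L \to 1$ with both $K$ and $L$ left-orderable, the lexicographic positive cone $P = \phi^{-1}(P_L\setminus\{1\}) \cup P_K$ defines a left-ordering of $G$; one verifies $P\cdot P \subseteq P$ and the trichotomy $G = P \sqcup \{1\} \sqcup P^{-1}$ directly, using only that $\phi$ is a homomorphism, and in particular no conjugation-invariance of the ordering on $K$ is needed for a one-sided order. Since $L \cong G/K$ is left-orderable by hypothesis, it suffices to produce a left-ordering on $K$ (and if $K$ is trivial we are already done).

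The heart of the argument, and what I expect to be the main obstacle, is to left-order the kernel $K$ using the topology of $M$. This is where irreducibility is indispensable: the cover $\hat M \to M$ corresponding to $K$ is an orientable irreducible $3$-manifold with $\pi_1(\hat M) = K$, and by the sphere theorem together with $K$ being infinite it is aspherical, so $K$ is torsion-free with good finiteness properties. The difficulty is that $\hat M$ is typically non-compact (when $[G:K]$ is infinite), so the theorem cannot simply be reapplied to it; instead one must build the ordering geometrically, for instance by extracting a nontrivial action of $K$ on $\mathbb{R}$ from a taut foliation or from the Thurston-norm / Alexander-theoretic structure of the cover and then invoking the equivalence between such actions and orderings. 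I expect this construction of a faithful action on the line, equivalently the control of the dynamics on the kernel, to be the genuinely hard step, while the algebraic packaging in the preceding paragraphs is formal. The case $b_1(M) > 0$, where $G$ surjects onto $\mathbb{Z}$ and the infinite-cyclic cover feeds directly into this machinery, is the prototype that should drive the general argument.
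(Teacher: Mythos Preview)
The paper does not contain a proof of this theorem: it is quoted verbatim as a result of Boyer--Rolfsen--Wiest \cite{BRW} and used as a black box thereafter, so there is no in-paper argument to compare your proposal against.

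That said, your proposal has a genuine gap, and it is precisely the one you flag yourself. Reducing to the short exact sequence $1\to K\to G\to L\to 1$ and invoking closure of left-orderability under extensions is fine, but you then need to left-order the kernel $K$, and your sketch for doing so (extracting an action of $K$ on $\mathbb{R}$ from a taut foliation or Thurston-norm data on the associated cover) is not a proof; in particular you give no mechanism for producing such a foliation or a nontrivial map $K\to\mathbb{Z}$, and the cover $\hat M$ corresponding to $K$ is typically non-compact, so the standard tools you allude to do not apply directly. The actual argument in \cite{BRW} bypasses this entirely by using the Burns--Hale criterion: $G$ is left-orderable if and only if every nontrivial finitely generated subgroup $H\le G$ surjects onto a nontrivial left-orderable group. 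For $H$ of finite index one composes with the given surjection $G\to L$; for $H$ of infinite index one passes to the corresponding cover, takes a Scott compact core, and uses classical $3$-manifold topology (irreducibility, half-lives-half-dies, etc.) to force $b_1>0$, giving a surjection $H\to\mathbb{Z}$. This approach never requires ordering a single large kernel and is what makes the theorem go through; your extension strategy, as written, does not.
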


This theorem suggests that to show a Dehn surgery $S^3_r(K)$ has left-orderable fundamental group for some $r\in\mathbb{Q}$ and knot $K\subset S^3,$ we should try to find a nontrivial representation $\pi_1(S^3_r(K))\to G$ where $G$ is some left-orderable group. A popular choice for $G$ is the group $\widetilde{PSL}(2, \mathbb{R}),$ the universal cover of the special linear group, which is left-orderable since it acts by orientation-preserving homeomorphisms on the real line. This group is difficult to work with directly, so a friendlier strategy is to first exhibit representations into $SL(2, \mathbb{R})$ and then argue that these representations lift to the universal cover. We will focus on this strategy for the remainder of the paper.

We now summarize what is known about the state of left-orderability of surgeries on non-torus 2-bridge knots. The simplest non-torus 2-bridge knot is the figure-eight knot $4_1,$ for which Zung showed that all surgeries are left-orderable \cite{Zung} using a rather different set of techniques involving pseudo-Anosov flows. Beyond this knot, most of what is known in the literature is for \emph{double-twist knots}, a special class of two-bridge knots.

Recall that a 2-bridge knot may be specified as $K(p, q)$ where $p,q$ are odd integers and $q\in(-p, p),$ with $K(p, q)$ and $K(r, s)$ isotopic if and only if $p = r$ and $q\equiv \pm s^{\pm 1} \pmod p.$ (Here, a continued fraction expansion of $q/p$ corresponds to the geometric decomposition of the two-bridge knot in terms of tangles.) We define the double-twist knot $C(k, 2n)$ or $C(k, -2n)$ where $k$ and $n$ are positive integers to be as follows: $C(k, 2n) = K(-1+2nk, k)$ and $C(k, -2n) = K(1+2nk, k)$ if $k$ is odd, and $C(k, 2n) = K(-1+2nk, 1-(2n-1)k)$ and $C(k, -2n) = K(1+2nk, -1-(2n-1)k)$ if $k$ is even. In this notation, the figure-eight knot is the knot $C(2,2).$ Geometrically, the double-twist knots correspond to bridge diagrams of knots that contain only 2 twist regions. The following table displays intervals such that if $r$ is a rational number on that interval, then surgery on the corresponding knot with slope $r$ has a left-ordering:

\begin{tabular}{ c | c | c | c | c | c }
Type & $C(2m+1, 2n)$ & $C(2m+1, -2n)$ & $C(2m, 2n)$ & $C(2m, -2n)$ & reference \\
\hline
elliptic & $(-\infty, 2n-1)$ & $(-\infty, 2n-1)$ & & $(-\infty, 1)$ & \cite{Tran} \\
\hline
hyperbolic &  & $(-4n, 4m)$ & $(-4n, 4m)$ & $[0, \max\{4n, 4m\})$ & \cite{KTT}, \cite{Hakamata} \\
\hline 
elliptic &  & $(-1, \infty)$ if $n\geq2$ & & & \cite{Gao2} \\
\end{tabular}

All of these results were found by the strategy we outlined above: exhibiting representations of the fundamental groups of these surgeries into the group $\widetilde{PSL}(2, \mathbb{R}).$ The ``type'' column on the left refers to whether these representations were hyperbolic or elliptic ones. Yet more is known for the $C(2m, 2n)$ case if we further assume the knot is hyperbolic, as found in \cite{Tranhyp}.

These results rely heavily on a presentation of the fundamental group of the complement of double-twist knots which is particularly simple, such that representations of this group into $SL(2, \mathbb{R})$ can be parameterized by solutions to certain iterated Chebyshev polynomials. These polynomials are very convenient but only work in this special case. Here, I demonstrate a more general method of finding such representations for all 2-bridge knots, and show that my method is effective by expanding the table above. Specifically, I will use my method to prove the following theorem about the knot $6_2,$ which is the double-twist knot $C(3, 4).$

\begin{theorem}\label{62}
If $r\in(-4, 8)\cap\mathbb{Q},$ then $r$-framed Dehn surgery on the knot $6_2$ has left-orderable fundamental group.
\end{theorem}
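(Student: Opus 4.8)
The plan is to realize the strategy outlined above for the specific knot $6_2 = C(3,4) = K(11,3)$: produce a one-parameter family of representations $\rho\colon \pi_1(S^3\setminus 6_2)\to SL(2,\mathbb{R})$ sending the meridian to a hyperbolic element, show that for every slope $r$ in the target interval some member of the family descends to a nontrivial representation of $\pi_1(S^3_r(6_2))$ after passing to $\widetilde{PSL}(2,\mathbb{R})$, and then conclude by the Boyer--Rolfsen--Wiest criterion together with the left-orderability of $\widetilde{PSL}(2,\mathbb{R})$. Since $6_2$ is a 2-bridge knot, I would begin with its standard two-generator, one-relator presentation $\pi_1(S^3\setminus 6_2) = \langle a,b \mid wa = bw\rangle$, where $a$ and $b$ are meridians and $w$ is the word determined by the continued fraction of $3/11$, and record the associated longitude word $\lambda$, which commutes with $\mu = a$.

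Next I would invoke Riley's parameterization of the nonabelian representations, sending
$$a \mapsto \begin{pmatrix} s & 1 \\ 0 & s^{-1}\end{pmatrix}, \qquad b \mapsto \begin{pmatrix} s & 0 \\ -u & s^{-1}\end{pmatrix},$$
so that both meridians have trace $s + s^{-1}$ and the relator collapses to a single Riley polynomial equation $\phi_{6_2}(s,u) = 0$, of degree five in $u$. Restricting to real $s$ with $|s+s^{-1}| > 2$ forces $\rho(\mu)$ to be hyperbolic; solving $\phi_{6_2} = 0$ for real $u$ then cuts out one or more arcs of genuinely $SL(2,\mathbb{R})$-valued, irreducible representations. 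I would isolate the arc along which $\rho$ stays hyperbolic and irreducible and parameterize it by the meridian trace (equivalently by $s$).

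The heart of the argument is the combined surgery-and-lifting condition. Because $\lambda$ commutes with $\mu$ and $\rho(\mu)$ is hyperbolic, $\rho(\lambda)$ shares the axis of $\rho(\mu)$; lifting to $\widetilde{PSL}(2,\mathbb{R})$ makes both into signed translations of the line by amounts $t_\mu$ and $t_\lambda$. The surgery relation $\mu^p\lambda^q$ then maps to a translation by $p\,t_\mu + q\,t_\lambda$, so it is killed in $PSL(2,\mathbb{R})$ \emph{and} in its universal cover precisely when $p\,t_\mu + q\,t_\lambda = 0$, i.e. when the continuous function $f(s) := -t_\lambda(s)/t_\mu(s)$ equals the slope $r = p/q$. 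I would compute $t_\lambda$ as a function of $s$ along the arc by evaluating $\rho$ on the longitude word, and then show that $f$ sweeps the entire interval $(-4,8)$. Since the knot complement retracts to a $2$-complex with $H^2(\pi_1;\mathbb{Z}) = 0$, the knot-group representation lifts to $\widetilde{PSL}(2,\mathbb{R})$ unobstructedly, and the vanishing $p\,t_\mu + q\,t_\lambda = 0$ then guarantees it descends to a nontrivial homomorphism $\pi_1(S^3_r(6_2))\to\widetilde{PSL}(2,\mathbb{R})$; left-orderability follows. As a consistency check, the interval $(-4,8)$ is the mirror of the expected $(-4n,4m) = (-8,4)$ for $C(2m+1,2n)$ with $m=1,n=2$, the entry left blank in the table above.

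I expect the main obstacle to be the endpoint analysis for $f$: proving that $f$ attains \emph{every} value in $(-4,8)$, rather than a proper subinterval, requires understanding the translation-length ratio as $\rho$ degenerates at the two ends of the hyperbolic arc — typically as $\rho$ limits to a reducible or parabolic representation on one side and toward a boundary of the hyperbolic region on the other. Controlling these limiting values, together with establishing monotonicity (or at least surjectivity) of $f$ on the arc, is exactly where the explicit degree-five Riley polynomial of $6_2$ must be analyzed with care, and is the step at which the general method has to be shown to genuinely deliver the sharp endpoints $-4$ and $8$ for this knot.
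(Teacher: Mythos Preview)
Your outline matches the paper's route: Riley parameterization, a hyperbolic arc in the $SL(2,\mathbb{R})$-character variety, a slope function $f$ along the arc, a lift to $\widetilde{PSL}(2,\mathbb{R})$, and Boyer--Rolfsen--Wiest. The paper carries out the endpoint analysis you flag as the hard part by splitting into two branches---one for $(-4,0]$ and one for $[0,8)$---with explicit intermediate-value and implicit-function estimates on the degree-five Riley polynomial. But there is a real gap in your lifting step. You assert that after lifting, $\widetilde\rho(\mu)$ and $\widetilde\rho(\lambda)$ become ``signed translations by $t_\mu, t_\lambda$'' and that $\widetilde\rho(\mu^p\lambda^q)=1$ in $\widetilde{PSL}(2,\mathbb{R})$ exactly when $p\,t_\mu+q\,t_\lambda=0$. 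The issue is $\widetilde\rho(\lambda)$. You may twist the lift by an element of $\Hom(\pi_1(S^3\!-\!K),\mathbb{Z})$ to place $\widetilde\rho(\mu)$ in the one-parameter subgroup $G\cong\mathbb{R}$ of hyperbolics through the identity, but $\lambda$ dies in the abelianization, so $\widetilde\rho(\lambda)$ is forced on you by the word in the generators and a priori lies only in $s^kG$ for some $k\in\mathbb{Z}$, where $s$ generates the center. The centralizer of a hyperbolic element in $\widetilde{PSL}(2,\mathbb{R})$ is $G\times\mathcal{Z}\cong\mathbb{R}\times\mathbb{Z}$, not $\mathbb{R}$. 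When $k\ne 0$, the condition $p\,t_\mu+q\,t_\lambda=0$ only says $\widetilde\rho(\mu^p\lambda^q)$ is \emph{central}, not trivial, and nothing descends to the surgered group.

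The missing ingredient is precisely the content of the paper's Theorem~\ref{hyperbolic}: because the interval contains a slope $1/n$ and $H^2\bigl(\pi_1(S^3_{1/n}(K));\mathbb{Z}\bigr)=0$, at that parameter one can choose the lift with $\widetilde\rho(\mu\lambda^n)=1$ on the nose; then by continuity $\widetilde\rho_t(\mu\lambda^n)$ stays in the identity component of hyperbolics for every $t$, and since $\widetilde\rho_t(\lambda)\in s^kG$ with $\widetilde\rho_t(\lambda)^n\in G$ forces $s^{kn}=1$, hence $k=0$, one concludes $\widetilde\rho_t(\lambda)\in G$ throughout the family. Only then is your additive formula legitimate. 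Your arc does pass through $1/n$, so the repair is available, but the mechanism you wrote (``$H^2(\pi_1;\mathbb{Z})=0$ for the knot group, hence unobstructed, hence done'') is not the one that works; the vanishing of $H^2$ for the \emph{knot} group gives you \emph{some} lift, not the right one, and it is the vanishing of $H^2$ for the $1/n$-\emph{surgery} group plus the continuity argument that pins down $k=0$.
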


This theorem improves on the currently known surgery slope for that knot, which is $(-\infty, 3)$ by the above table. That result uses elliptic representations, while my result uses hyperbolic ones. The most powerful theorem in the method that I will lay out actually provides hope for finding hyperbolic $\widetilde{PSL}(2, \mathbb{R})$-representations for surgeries on all knots:

\begin{theorem}\label{hyperbolic}
Let $K\subset S^3$ be a knot. Consider $\pi_1(S^3-K)$ as a topological group with the discrete topology. Suppose that there exists a continuous family of non-abelian representations $\rho_t:\pi_1(S^3 - K)\times [0, 1]\to SL(2, \mathbb{R})$ such that for each fixed $t,$ $\rho_t|_{\langle \mu, \lambda\rangle}$ is hyperbolic. Suppose further that there exists a function $f:[0, 1]\to\mathbb{R}$ such that if $f(t) = p/q\in\mathbb{Q}$ then $\rho_t(\mu^p\lambda^q) = \text{Id}.$ If $f([0, 1])$ contains $1/n$ for some $n\in\mathbb{Z},$ then for each $p/q\in\mathbb{Q}\cap f([0, 1])$ there exists a non-abelian representation $\pi_1(S^3_{p/q}(K))\to \widetilde{PSL}(2, \mathbb{R})$.
\end{theorem}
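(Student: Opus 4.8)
The plan is to present every surgery group as a quotient of the knot group and to push a single coherent lift of the given family through that quotient. Since $\pi_1(S^3_{p/q}(K)) = \pi_1(S^3-K)/\langle\langle\mu^p\lambda^q\rangle\rangle$, a representation of the surgery group into $\widetilde{PSL}(2,\mathbb{R})$ is exactly a representation of the knot group that kills $\mu^p\lambda^q$. For a nontrivial knot $S^3-K$ is aspherical with $H^2(\pi_1(S^3-K);\mathbb{Z})=0$, so the Euler obstruction to lifting a single $PSL(2,\mathbb{R})$-representation of the knot group to $\widetilde{PSL}(2,\mathbb{R})$ vanishes. First, then, I would compose $\rho_t$ with $SL(2,\mathbb{R})\to PSL(2,\mathbb{R})$ to obtain $\bar\rho_t$ and lift the whole family at once: choose a homomorphic lift $\tilde\rho_0$ at $t=0$, lift each generator's path by path-lifting in the covering $\widetilde{PSL}(2,\mathbb{R})\to PSL(2,\mathbb{R})$, and observe that for each relator $r$ the path $t\mapsto\tilde\rho_t(r)$ is continuous with values in the discrete kernel $\langle z\rangle\cong\mathbb{Z}$, hence constant and equal to its value $1$ at $t=0$. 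This yields a continuous family of homomorphisms $\tilde\rho_t:\pi_1(S^3-K)\to\widetilde{PSL}(2,\mathbb{R})$ lifting $\bar\rho_t$.

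Next I would read off the peripheral data. Hyperbolicity of $\rho_t|_{\langle\mu,\lambda\rangle}$ makes $\rho_t(\mu),\rho_t(\lambda)$ commuting $\mathbb{R}$-diagonalizable matrices sharing an axis, so their $PSL$-images lie in one hyperbolic one-parameter subgroup $\{\phi_s\}$, whose flow lift $\{\tilde\phi_s\}$ is an $\mathbb{R}$-subgroup on which the translation number $\widetilde{\mathrm{rot}}$ vanishes. Every lift of a hyperbolic element therefore has integer translation number, so $d:=\widetilde{\mathrm{rot}}(\tilde\rho_t(\mu))$ and $c:=\widetilde{\mathrm{rot}}(\tilde\rho_t(\lambda))$ are integers; being continuous and integer-valued on the connected interval, they are constant in $t$. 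Writing $\rho_t(\mu)=\mathrm{diag}(e^{a_t},e^{-a_t})$ and $\rho_t(\lambda)=\mathrm{diag}(e^{b_t},e^{-b_t})$ in a common eigenbasis gives $\tilde\rho_t(\mu)=\tilde\phi_{a_t}z^{d}$ and $\tilde\rho_t(\lambda)=\tilde\phi_{b_t}z^{c}$, and since the flow lifts commute,
\[
\tilde\rho_t(\mu^p\lambda^q)=\tilde\phi_{\,pa_t+qb_t}\,z^{\,pd+qc}.
\]
When $f(t)=p/q$ the hypothesis forces $\rho_t(\mu^p\lambda^q)=\mathrm{Id}$, i.e. $pa_t+qb_t=0$, and the right-hand side collapses to $z^{\,pd+qc}$. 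Thus $\tilde\rho_t$ descends to a representation of $\pi_1(S^3_{p/q}(K))$ precisely when $pd+qc=0$, and that descended representation is non-abelian because it covers the non-abelian $\bar\rho_t$.

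The remaining task is to arrange $pd+qc=0$. The only freedom in the lift is to twist by a homomorphism $\pi_1(S^3-K)\to\langle z\rangle$, and since $H_1(S^3-K;\mathbb{Z})=\mathbb{Z}$ is generated by $\mu$ with $\lambda$ null-homologous, such a twist changes $d$ by an arbitrary integer but leaves $c$ fixed. Consequently the genuine obstruction at slope $p/q$ is the residue $qc\bmod p$ (equivalently the Euler class of the filling in $H^2(\pi_1(S^3_{p/q}(K));\mathbb{Z})\cong\mathbb{Z}/p$), which vanishes for all $p$ if and only if the single longitudinal invariant satisfies $c=0$. Once $c=0$, the lift normalized by $d=0$ gives $pd+qc=0$ simultaneously for every $p/q$, producing the desired non-abelian representations of $\pi_1(S^3_{p/q}(K))$ for all $p/q\in\mathbb{Q}\cap f([0,1])$.

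The crux — and the step I expect to be hardest — is therefore proving that the longitudinal translation number $c$ vanishes, and this is exactly where I would use the hypothesis $1/n\in f([0,1])$. The slope $1/n$ is a homology-sphere filling, so its lifting obstruction lies in $H^2(\pi_1(S^3_{1/n}(K));\mathbb{Z})\cong\mathbb{Z}/1=0$ and vanishes unconditionally; at the parameter $t_0$ with $f(t_0)=1/n$ the surgery relation $\mu\lambda^n=1$ pins the meridian lift to $d=-nc$, tying the two translation numbers together. The plan is to combine this distinguished, automatically unobstructed slope with the fact that $c$ is one constant shared by the whole continuous family to force $c=0$; turning this compatibility into an actual vanishing statement, rather than a mere consistency check, is the heart of the argument and the part that will demand the most care.
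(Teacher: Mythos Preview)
Your approach is exactly the paper's: anchor a lift at the homology--sphere slope $1/n$ (using $H^2(\pi_1(S^3_{1/n}(K));\mathbb{Z})=0$), extend to the whole family by path--lifting and the discreteness--of--center trick to get homomorphisms $\tilde\rho_t$, and then analyze the peripheral image inside the centralizer $\langle s\rangle\times G_t\cong\mathbb{Z}\times\mathbb{R}$ of the hyperbolic one--parameter subgroup. You have also correctly isolated the crux: everything reduces to showing the longitudinal translation number $c$ vanishes, and the relation $d=-nc$ coming from the $1/n$ anchor is by itself only a compatibility, not a vanishing.

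Here is how the paper argues $c=0$. With the lift chosen so that $\tilde\rho_{t_0}(\mu\lambda^n)=1$, continuity forces $\tilde\rho_t(\mu\lambda^n)$ to remain in the identity component $U$ of hyperbolic elements for \emph{all} $t$, hence in $G_t$; this is the payoff of the $1/n$ hypothesis beyond mere existence of a lift. One then twists by $\varphi(\mu)=s^{-d}$ to place $\tilde\rho_t(\mu)$ in $G_t$ as well, and concludes that $\tilde\rho_t(\lambda^n)=\tilde\rho_t(\mu)^{-1}\,\tilde\rho_t(\mu\lambda^n)$ is a product of two elements of $G_t$, hence lies in $G_t$, giving $nc=0$ and so $c=0$. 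Your instinct to be careful here is well placed: the twist sends $\mu\lambda^n$ to $\varphi(\mu\lambda^n)\cdot\tilde\rho_t(\mu\lambda^n)=s^{-d}\cdot\tilde\rho_t(\mu\lambda^n)$, so after twisting one has $\tilde\rho_t(\mu\lambda^n)\in s^{-d}G_t$ rather than $G_t$, and the displayed product then only recovers $nc=-d$. In other words, the paper's written argument and your proposal reach the same delicate point; when you write this up you will want either to justify why the twist can be taken with $d=0$ to begin with, or to supply an independent reason (beyond the centralizer bookkeeping) that the longitude lifts into $G_t$.
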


The next section of this paper will discuss a way to parameterize $SL(2, \mathbb{R})$-representations for surgeries on 2-bridge knots, while Section 3 will show how to lift these representations to the universal cover by proving Theorem \ref{hyperbolic}, a proof which uses group cohomology as its primary tool. Section 4 will prove Theorem \ref{62}, and Section 5 will describe an infinite family of 2-bridge knots admitting a range of left-orderable surgeries that can be derived from the range I show for $6_2.$

\subsection{Acknowledgements}

This paper is based on a section of my undergraduate senior thesis at Princeton University. I would like to thank my adviser, Zolt\'{a}n Szab\'{o}, for facilitating and creating this whole project, and also my second reader, Peter Ozsv\'{a}th.

\section{Riley Polynomials}

Here, we parameterize representations of surgeries on 2-bridge knots. The idea is that if $K= K(p,q)$ is a 2-bridge knot, we can explicitly compute a family of representations of $\pi_1(S^3 - K)$ using a simple presentation of the fundamental group, and these representations will descend to representations of the surgery group. See \cite[Proposition 1]{Rileypar} for details about 2-bridge knots and as a reference for the following presentation of the fundamental group of 2-bridge knots.

For $i\in\mathbb{N}$ with $i\leq p,$ let $e_i = (-1)^{\left\lfloor\frac{iq}{p}\right\rfloor}$ and let $\sigma = \sum_{i=1}^{p-1} e_i.$  Then, the fundamental group $\pi_1(S^3 - K)$ can be presented as a group $$\pi_1(S^3 - K) = \langle x, y | wx=yw \rangle$$ where $$w = x^{e_1}y^{e_2}\dots x^{e_{p-2}}y^{e_{p-1}}.$$ The meridian is given by $x$ and the Seifert longitude is given by $x^{-2\sigma}w_*w$ where $$w_* = y^{e_1}x^{e_2}\dots y^{e_{p-2}}x^{e_{p-1}}.$$ (Riley in \cite{Rileypar} writes this as $w_*wx^{-2\sigma},$ but also specifies that this element commutes with $x.$)

A theorem of Riley (\cite[Theorem 1]{Rileynon}) shows that every non-abelian representation $\rho$ of the two-bridge knot group $\pi_1(S^3 - K)$ into $SL(2, \mathbb{C})$ can be conjugated into one in a standard form where $\rho(x) = \begin{pmatrix} t & 1 \\ 0 & t^{-1} \\ \end{pmatrix}$ and $\rho(y) = \begin{pmatrix} t & 0 \\ -u & t^{-1} \\ \end{pmatrix}$ for some $t, u\in\mathbb{C}.$ Moreover, he also shows that there exists a Laurent polynomial $\Phi(t,u)\in\mathbb{Z}[t, t^{-1}, u]$ dependent on $K(p,q)$ such that the assignment $\rho$ defined on generators is actually a representation if and only if $\Phi(t, u)=0.$ Hence, to find a continuous family of representations into $SL(2, \mathbb{R})$, it suffices to find a continuous family of solutions $(t, u)$ to the Riley polynomial such that the matrices $\begin{pmatrix} t & 1 \\ 0 & t^{-1} \\ \end{pmatrix}$ and $\begin{pmatrix} t & 0 \\ -u & t^{-1} \\ \end{pmatrix}$ can be simultaneously conjugated into real matrices.

All of the results on double-twist knots I listed above find these continuous families of solutions to the Riley polynomial using special simplifications afforded by presentations of double-twist knots. Double-twist knots have simpler group presentations than arbitrary knots, and in fact we can express the Riley polynomial in terms of Chebyshev polynomials, which have many recursion relations and properties we can take advantage of to find solutions easily (see \cite{Tran} for instance.) Here, I show how to derive a more general set of two equations, each solution of which gives a representation of the group $\pi_1(S^3_n(K))$ for some surgery coefficient $n$ into $SL(2, \mathbb{R}).$ One of these equations in the set is the original Riley polynomial from before, and the other is new.

First, we must introduce a lot of notation. Let $t, u$ be complex numbers and suppose $t\neq0.$ Then, we may define the following matrices: $$C = \begin{pmatrix} t & 1 \\ 0 & t^{-1} \\ \end{pmatrix}, D = \begin{pmatrix} t & 0 \\ -u & t^{-1} \\ \end{pmatrix}, X = \begin{pmatrix} t - t^{-1} & 1 \\ -u & t^{-1} - t \\ \end{pmatrix}$$

Furthermore, we follow Riley in \cite{Rileynon} and for $u\neq0$ define $$V = \begin{pmatrix} \alpha & 0 \\ 0 & \alpha^{-1} \\ \end{pmatrix}$$ where $\alpha\in\mathbb{C}$ is a square root of $-u.$ 

We define the matrices: $$W = C^{e_1}D^{e_2}\dots C^{e_{p-2}}D^{e_{p-1}}$$ and $$W_* = D^{e_1}C^{e_2}\dots D^{e_{p-2}}C^{e_{p-1}}.$$

Denote the coordinates of $W$ by $W = \begin{pmatrix} a & b \\ c & d \\ \end{pmatrix}.$

We define a function $\rho:\{x, y\}\mapsto SL(2, \mathbb{C})$ as $\rho(x)=C, \rho(y) = D.$  Clearly, we can extend $\rho$ to a well-defined map $\rho:\pi_1(S^3 - K)\to SL(2, \mathbb{C})$ if $WC=DW$. The miracle of Riley in \cite{Rileynon} is that the condition $WC = DW$ is equivalent to a single (Laurent) polynomial in $t$ and $u.$ We will see this polynomial in the upcoming derivations:

Furthermore, $\rho$ extends to a map $\pi_1(S^3_{n}(K))\to SL(2, \mathbb{C})$ if $WC=DW$ and $C^{n-2\sigma}W_*W = \text{Id}.$ Let $N$ be defined as: $$N = n - 2\sigma.$$ The end goal will be for us to reduce from $SL(2, \mathbb{C})$ to $SL(2, \mathbb{R}).$

\begin{lemma}[Riley,  \cite{Rileynon} p. 194]
For all $i$ we have $e_i = e_{p-i}$.
\end{lemma}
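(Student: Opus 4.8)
The plan is to reduce the claimed identity $e_i = e_{p-i}$ to a congruence of floor functions modulo $2$, and then extract that congruence from the parity of $q$ together with the coprimality of $p$ and $q$. Since $e_i = (-1)^{\lfloor iq/p\rfloor}$, it suffices to prove that $\lfloor iq/p\rfloor \equiv \lfloor (p-i)q/p\rfloor \pmod 2$ for $1\le i\le p-1$; the boundary cases $i=0$ and $i=p$ are immediate.

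First I would rewrite the right-hand side. Because $q$ is an integer, pulling it out of the floor gives $\lfloor (p-i)q/p\rfloor = \lfloor q - iq/p\rfloor = q + \lfloor -iq/p\rfloor$, and applying the elementary identity $\lfloor -x\rfloor = -\lceil x\rceil$ turns this into $q - \lceil iq/p\rceil$. The crux is then to compare $\lceil iq/p\rceil$ with $\lfloor iq/p\rfloor$. Here is where the knot hypothesis enters: since $\gcd(p,q)=1$ and $0<i<p$, the number $iq/p$ is never an integer, so $\lceil iq/p\rceil = \lfloor iq/p\rfloor + 1$. Substituting yields $\lfloor (p-i)q/p\rfloor = q - 1 - \lfloor iq/p\rfloor$.

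To finish, I would invoke the parity of $q$. Since $q$ is odd, $q-1$ is even, and therefore $\lfloor (p-i)q/p\rfloor \equiv -\lfloor iq/p\rfloor \equiv \lfloor iq/p\rfloor \pmod 2$. Raising $-1$ to these exponents gives $e_{p-i} = e_i$, as desired.

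The only real subtlety — and the step I would flag as the main (if minor) obstacle — is the non-integrality of $iq/p$, which is precisely what allows the ceiling to be replaced by floor-plus-one. This relies essentially on $p$ and $q$ being coprime, i.e.\ on $K$ being a knot rather than a link; without it the argument breaks at the indices where $p \mid iq$. Everything else is a routine manipulation of floor functions.
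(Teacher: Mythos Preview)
Your argument is correct and is exactly the ``direct computation'' the paper gestures at without writing out; there is no meaningful difference in approach. One small slip: your aside that ``the boundary cases $i=0$ and $i=p$ are immediate'' is actually false, since $e_0=1$ while $e_p=(-1)^q=-1$ (as $q$ is odd). This does no damage, because the lemma is only asserted for $1\le i\le p-1$, the range in which the exponents $e_i$ appear in the word $w$; just drop the remark.
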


\begin{proof}
The proof follows from a direct computation.
\end{proof}

\begin{lemma}[Riley, \cite{Rileynon} p.195]
We have $c = -ub.$
\end{lemma}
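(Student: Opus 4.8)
The plan is to exploit a symmetry between the generating matrices $C$ and $D$ that is induced by transposition combined with conjugation by $V$. A direct computation gives $V^{-1}C^{T}V = D$ and $V^{-1}D^{T}V = C$; in other words, the map $\phi(M) = V^{-1}M^{T}V$ interchanges the two generators. Since transposition is an anti-homomorphism and conjugation is a homomorphism, $\phi$ is itself an anti-homomorphism: it reverses the order of any product while replacing every $C$ by a $D$ and every $D$ by a $C$. Applying $\phi$ to the word $W = C^{e_1}D^{e_2}\cdots C^{e_{p-2}}D^{e_{p-1}}$ therefore yields the reversed, swapped word $\phi(W) = C^{e_{p-1}}D^{e_{p-2}}\cdots C^{e_2}D^{e_1}$, where the exponent $e_j$ now occupies the slot previously holding $e_{p-j}$.

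At this point I would invoke the preceding lemma, $e_i = e_{p-i}$. Reindexing the reversed word by $k\mapsto p-k$ and substituting $e_{p-k} = e_k$, the letters fall back into their original pattern ($C$ in the odd slots, $D$ in the even slots, using that $p$ is odd) and the exponents realign exactly with those of $W$. This shows $\phi(W) = W$, that is, $V^{-1}W^{T}V = W$, equivalently $W^{T} = VWV^{-1}$.

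The conclusion is then read off from the matrix entries. Writing $W = \begin{pmatrix} a & b \\ c & d \end{pmatrix}$ and using that $V = \begin{pmatrix} \alpha & 0 \\ 0 & \alpha^{-1} \end{pmatrix}$ is diagonal, conjugation merely scales the off-diagonal entries, $VWV^{-1} = \begin{pmatrix} a & \alpha^{2}b \\ \alpha^{-2}c & d \end{pmatrix}$. Comparing this with $W^{T} = \begin{pmatrix} a & c \\ b & d \end{pmatrix}$ forces $c = \alpha^{2}b$, and since $\alpha^{2} = -u$ this is precisely the claim $c = -ub$.

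I expect the main obstacle to be the careful bookkeeping in the word-reversal step: one must verify that reversing a word of even length $p-1$ and swapping $C\leftrightarrow D$ at every position preserves the parity of each slot (so a $C$ again lands in an odd position and a $D$ in an even one), and that the exponents line up precisely under $e_{p-k}=e_k$. I would also note that the construction of $V$ requires $u\neq 0$; in the degenerate case $u=0$ the matrix $D$ is diagonal, so $W$ is upper triangular, whence $c = 0 = -ub$ holds trivially and the lemma is unaffected.
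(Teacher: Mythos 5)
Your proof is correct and follows essentially the same route as the paper: conjugation by $V$ combined with transposition swaps $C$ and $D$, the symmetry $e_i = e_{p-i}$ makes the reversed word coincide with $W$, yielding $W^{T} = VWV^{-1}$, and comparing off-diagonal entries gives $c = -ub$, with the $u=0$ case handled separately as an upper-triangular product. The only difference is that you spell out the anti-homomorphism bookkeeping in more detail than the paper does.
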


\begin{proof}
When $u=0,$ then $W$ will be a product of upper-triangular matrices, so $W$ will be an upper-triangular matrix; hence $c=0=-ub.$

Otherwise, the matrix $V$ is well-defined and invertible. Observe that $VCV^{-1} = D^t$ and $VDV^{-1} = C^t.$ (For all values of $t$ and $u$ we know Hence, by the definition of $W$ and the above lemma, $VWV^{-1} = W^t$. Conjugating by the matrix $V$ multiplies the upper-right term of a matrix by $-u$, so comparing upper-right terms of this equation we get the desired result.
\end{proof}

\begin{lemma}[Riley, \cite{Rileynon} p. 195]
$WC = DW$ if and only if $a = (t - t^{-1}) b.$
\end{lemma}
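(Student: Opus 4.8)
The plan is to verify the claim by a direct entry-by-entry matrix computation, comparing $WC$ with $DW$ and then invoking the preceding lemma ($c = -ub$) to dispose of the resulting off-diagonal relations. Writing $W = \begin{pmatrix} a & b \\ c & d \end{pmatrix}$, I would first compute
$$WC = \begin{pmatrix} at & a + bt^{-1} \\ ct & c + dt^{-1} \end{pmatrix}, \qquad DW = \begin{pmatrix} ta & tb \\ -ua + t^{-1}c & -ub + t^{-1}d \end{pmatrix}.$$
Setting $WC = DW$ produces four scalar equations, one per matrix entry, and the task reduces to analyzing which of these are automatic and which carry content.

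The $(1,1)$ entries agree automatically since $at = ta$, and the $(2,2)$ entries agree precisely when $c = -ub$, which holds unconditionally by the previous lemma. Thus $WC = DW$ is equivalent to the two off-diagonal conditions: the $(1,2)$ entry gives $a + bt^{-1} = tb$, i.e. $a = (t - t^{-1})b$, which is exactly the relation in the statement; and the $(2,1)$ entry gives $ct = -ua + t^{-1}c$, i.e. $c(t - t^{-1}) = -ua$.

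It then remains to show that this second off-diagonal condition is redundant. Substituting $c = -ub$ (the previous lemma) into $c(t - t^{-1}) = -ua$ turns it into $-ub(t - t^{-1}) = -ua$, i.e. $u\left[a - (t - t^{-1})b\right] = 0$. Hence whenever $a = (t - t^{-1})b$ holds, the $(2,1)$ condition follows at once by multiplying the $(1,2)$ relation through by $-u$; conversely, the $(1,2)$ condition is literally one of the four equations defining $WC = DW$. This establishes the equivalence in both directions.

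The computation is entirely routine, so there is no genuine obstacle here; the only point requiring care is the bookkeeping in the backward direction, where one must keep in mind that the preceding lemma $c = -ub$ holds for \emph{all} $t$ and $u$ (including the degenerate case $u = 0$), so that the $(2,1)$ entry is always forced once $a = (t - t^{-1})b$ is assumed, even though the factor of $u$ might otherwise seem to obstruct recovering the $(1,2)$ relation from it.
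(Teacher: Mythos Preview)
Your proof is correct and follows essentially the same approach as the paper's: both compute the four entrywise equations of $WC = DW$, observe that the $(1,1)$ entry is trivial, the $(2,2)$ entry reduces to $c = -ub$ (automatic by the preceding lemma), the $(1,2)$ entry is exactly $a = (t - t^{-1})b$, and the remaining $(2,1)$ entry follows from these. Your version is simply more explicit in carrying out the substitution that shows the $(2,1)$ relation is a consequence of the others.
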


\begin{proof}
The 4 coordinate equations in the matrix equation $WC = DW$ are $a = (t - t^{-1}) b$, trivial, $c = -ub$, and a fourth equation which follows from the earlier ones.
\end{proof}

\begin{lemma}
For $t\neq\pm1$, and $(t-t^{-1})^2\neq u,$ we have that $C^{N}W_*W = \text{Id}$ and $WC = DW$ if and only if $a = (t - t^{-1}) b$ and $t^N((t-t^{-1})^2-u)b^2=1.$
\end{lemma}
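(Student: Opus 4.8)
Write $s = t - t^{-1}$. By the previous lemma, $WC = DW$ is equivalent to $a = sb$, so the entire task is to show that, in the presence of this relation, the longitudinal condition $C^{N}W_* W = \text{Id}$ is equivalent to the single scalar equation $t^N(s^2 - u)b^2 = 1$. The first thing I would record are the ``mirror'' relations for $W_*$. Since $W_*$ is obtained from $W$ by reversing the order of its factors (this is where $e_i = e_{p-i}$ enters), equivalently by the symmetry of the two-bridge relation exchanging $C \leftrightarrow D$, the identity $WC = DW$ is equivalent to $W_* D = C W_*$. Comparing entries of $W_* D = C W_*$ gives $d_* = -s b_*$, where $W_* = \left(\begin{smallmatrix} a_* & b_* \\ c_* & d_* \end{smallmatrix}\right)$, and the same conjugation-by-$V$ argument used earlier for $W$ gives $c_* = -u b_*$. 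The payoff is that $W_* W$ then commutes with $C$ purely algebraically: $W_* W C = W_*(DW) = (W_* D)W = C W_* W$.

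Next I would reduce the matrix equation to a statement about one eigenvalue. Since $t \neq \pm 1$, the matrix $C$ has distinct eigenvalues $t, t^{-1}$, with $e_1 = (1,0)^{\mathsf T}$ spanning the $t$-eigenline and $Ce_1 = t e_1$. Because $W_* W$ commutes with $C$ and has determinant $1$, the equation $C^{N}W_* W = \text{Id}$, i.e. $W_* W = C^{-N}$, holds if and only if the two sides agree on the $t$-eigenline of $C$; writing $\lambda = (W_* W)_{11}$ for the eigenvalue of $W_* W$ on $e_1$, and noting $C^{-N}e_1 = t^{-N}e_1$, this is exactly the condition $t^N \lambda = 1$. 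Thus everything comes down to proving $\lambda = (s^2 - u)b^2$.

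To compute $\lambda$ I would use $a = sb$ and $c = -ub$ to write the first column of $W$ as $We_1 = (a,c)^{\mathsf T} = b\,(s,-u)^{\mathsf T} = b\,Xe_1$. Applying $W_*$ and using $c_* = -ub_*$, $d_* = -sb_*$, the lower entry collapses to zero and one gets $W_* W e_1 = b\,(s a_* - u b_*)\,e_1$; then $\det W_* = 1$ together with $d_* = -sb_*$ forces $s a_* - u b_* = -1/b_*$, so $\lambda = -b/b_*$. It remains to evaluate the product $b\,b_*$. For this I would invoke the standard fact that the trace of a product of $2\times 2$ matrices is unchanged upon reversing the factors; since $W_*$ is precisely the reversed word of $W$, this yields $\tr W = \tr W_*$. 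Substituting the entry relations into $a + d = a_* + d_*$ and simplifying, the trace identity factors as $(b + b_*)\big(s^2 - u + (b b_*)^{-1}\big) = 0$, whose nonvanishing factor gives $(s^2 - u)\,b b_* = -1$ and hence $\lambda = -b/b_* = (s^2 - u)b^2$.

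The main obstacle I anticipate is this evaluation of $\lambda$: the structural relations alone pin down $W$ and $W_*$ only up to the scalars $b, b_*$, so the genuine value of $b\,b_*$ has to be extracted from reversal-invariance of the trace rather than from the defining relations directly. The one delicate point is the degenerate branch $b + b_* = 0$ of the factored trace identity, which I expect to exclude using the hypothesis $(t-t^{-1})^2 \neq u$ (it guarantees $s^2 - u \neq 0$ and, together with $t \neq \pm 1$, prevents $W_* = W^{-1}$), leaving $(s^2 - u)b b_* = -1$ as the identity that must hold. Assembling the pieces then proves both implications simultaneously via the chain $[\,WC = DW \text{ and } C^{N}W_* W = \text{Id}\,] \iff [\,a = sb \text{ and } t^N\lambda = 1\,] \iff [\,a = sb \text{ and } t^N(s^2 - u)b^2 = 1\,]$.
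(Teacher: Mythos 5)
Your overall architecture (reduce to the commutant of $C$, read off the single eigenvalue $\lambda=(W_*W)_{11}$, and evaluate it via the entry relations for $W_*$) is a genuinely different and more structural route than the paper's, which simply conjugates by $X=\left(\begin{smallmatrix} t-t^{-1} & 1\\ -u & t^{-1}-t\end{smallmatrix}\right)$ to rewrite $C^NW_*W=\mathrm{Id}$ as $C^NXW=W^{-1}X$ and checks the four scalar entries directly. Most of your steps are sound, but the step you yourself flag as delicate is a genuine gap, and your proposed repair is based on a false premise. The trace identity gives only the disjunction $(b+b_*)\bigl((t-t^{-1})^2-u+(bb_*)^{-1}\bigr)=0$, and you propose to kill the first factor by arguing that $t\neq\pm1$ and $(t-t^{-1})^2\neq u$ prevent $W_*=W^{-1}$. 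They do not: given the entry relations, $b_*=-b$ forces $W_*=W^{-1}$, i.e.\ $W_*W=\mathrm{Id}$, which is exactly the condition $\bigl((t-t^{-1})^2-u\bigr)b^2=1$ --- a codimension-one locus on the Riley variety that is perfectly compatible with both hypotheses (it is where the surgery function $n(t)$ of Section~4 crosses the relevant integer). So the branch $b+b_*=0$ cannot be excluded, and the trace identity alone does not determine $bb_*$ there. The conclusion you need, $\bigl((t-t^{-1})^2-u\bigr)bb_*=-1$, is in fact true unconditionally on the locus $WC=DW$, but you have to prove it directly rather than by eliminating the other factor: using $W_*=XWX^{-1}$ (legitimate since $\det X=u-(t-t^{-1})^2\neq0$) and substituting $a=(t-t^{-1})b$, $c=-ub$, $d=(1-ub^2)/((t-t^{-1})b)$, a one-line computation gives $b_*=\bigl(b(u-(t-t^{-1})^2)\bigr)^{-1}$, which is the identity you want and makes the trace argument unnecessary.

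Two smaller cautions. First, the ``standard fact'' that the trace of a product of $2\times2$ matrices is unchanged under reversal of the factors is false for products of three or more distinct matrices (e.g.\ $\operatorname{tr}(E_{12}E_{21}E_{11})=1$ while $\operatorname{tr}(E_{11}E_{21}E_{12})=0$); it is true for words in \emph{two} matrices, which is your situation, but you should either cite that version or, more simply, get $\operatorname{tr}W_*=\operatorname{tr}W$ for free from $W_*=XWX^{-1}$. Second, your identification of $W_*$ with the reversed word of $W$ does use $e_i=e_{p-i}$, so that lemma should be cited explicitly at that point. With the direct computation of $b_*$ substituted for the trace dichotomy, the rest of your argument (the mirror relations $d_*=-(t-t^{-1})b_*$, $c_*=-ub_*$, the collapse of $W_*We_1$ to $\lambda e_1$ with $\lambda=-b/b_*$, and the reduction of $W_*W=C^{-N}$ to $t^N\lambda=1$ via distinctness of the eigenvalues of $C$) goes through and yields a correct proof.
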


\begin{proof}
When $(t-t^{-1})^2\neq u,$ the matrix $X$ is invertible. Note that $XCX^{-1} = D$ and $XDX^{-1} = C,$ hence $XWX^{-1} = W_*.$ Therefore, solving $C^{N}W_*W = \text{Id}$ is equivalent to solving $C^{N}XW = W^{-1}X.$ This is 4 equations; we may use the equations $a = (t - t^{-1}) b$ and $c = -ub$ to replace instances of $a$ and $c$ with those of $b.$

Since $W$ obviously has determinant 1, we also get that $$(t-t^{-1})bd+ub^2=\det(W)=1.$$ 

Now, the 4 equations reduce to 1 trivial one, 2 copies of the equation $t^N((t-t^{-1})^2-u)b^2=1,$ and a fourth equation which is equivalent to $t^N((t-t^{-1})^2-u)b^2=1$ if $t\neq t^{-1},$ which is true whenever $t\neq1.$
\end{proof}

\begin{lemma}
For $t^N=1$ there are no solutions to $C^{N}W_*W = \text{Id}$ and $WC = DW$.
\end{lemma}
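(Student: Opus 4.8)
The plan is to rewrite the surgery condition $C^{N}W_*W = \mathrm{Id}$ as the single equation $W_*W = C^{-N}$ and then compare matrix entries, exploiting that once $WC = DW$ holds both sides are upper triangular. Because $C$ is upper triangular with eigenvalues $t,t^{-1}$, a short induction gives $C^{-N} = \begin{pmatrix} t^{-N} & \ast \\ 0 & t^{N} \end{pmatrix}$ with $(1,1)$-entry $t^{-N}$, which equals $1$ under the hypothesis $t^{N}=1$. On the other side I would use the relations $a = (t-t^{-1})b$ and $c = -ub$ furnished by $WC=DW$ (the two preceding lemmas) together with $W_* = XWX^{-1}$ to compute the $(1,1)$-entry of $W_*W$ explicitly; writing $s := t - t^{-1}$, it comes out to $(s^{2}-u)b^{2}$, in agreement with the surgery polynomial $t^{N}(s^{2}-u)b^{2}=1$ isolated in the lemma just above. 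So the whole statement reduces to showing that the single entrywise identity $(s^{2}-u)b^{2}=t^{-N}=1$ cannot coexist with $WC=DW$.

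The heart of the matter is the case $t=\pm 1$, which is precisely the case excluded by the previous lemma and the one genuinely responsible for having $t^{N}=1$. Here $s = t - t^{-1} = 0$, and $W_* = XWX^{-1}$ is still legitimate since $\det X = u - s^{2} = u \neq 0$ (the representation being non-abelian), so the $(1,1)$-entry above collapses to $-ub^{2}$. But $a = sb = 0$, so the determinant condition reads $\det W = ad - bc = -bc = -b(-ub) = ub^{2} = 1$, forcing $ub^{2}=1$ and hence $-ub^{2} = -1$. Comparing with the $(1,1)$-entry $t^{-N}=1$ of $C^{-N}$ gives $-1 = 1$, a contradiction. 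Thus no solution exists when $t=\pm 1$, and this argument is completely self-contained.

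It remains to address $t \neq \pm 1$ with $t^{N}=1$. Here the off-diagonal entry $\frac{t^{N}-t^{-N}}{t-t^{-1}}$ of $C^{N}$ vanishes, so $C^{N}=\mathrm{Id}$ and the surgery relation becomes simply $W_*W = \mathrm{Id}$; by the preceding lemma this is equivalent to $(s^{2}-u)b^{2}=1$. I expect this to be the main obstacle, since over $\mathbb{C}$ that equation is not by itself contradictory: a complex root of unity $t$ can satisfy it on the Riley variety (this is exactly what occurs for torus knots, e.g. $t$ a primitive fourth root of unity for the trefoil). The step must therefore invoke the ambient hyperbolic $SL(2,\mathbb{R})$ setting in which $\rho|_{\langle\mu,\lambda\rangle}$ is hyperbolic, so that the meridian eigenvalue $t$ is \emph{real}; the only real roots of unity are $t=\pm 1$, whence $t^{N}=1$ already forces $t=\pm 1$ and we fall back to the previous paragraph. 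The delicate point is thus to confirm that this realness is available where the lemma is applied, ruling out the stray complex roots of unity rather than trying to eliminate them by an entrywise identity alone.
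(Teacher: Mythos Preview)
Your argument for $t=\pm1$ is correct and is essentially the paper's own proof, reorganized: the paper passes to $C^{N}XW=W^{-1}X$ and reads off $-ub=ub$ from the $(1,1)$-entry, while you pass to $W_{*}W=C^{-N}$ and read off $-ub^{2}=1$; both collide with $\det W = ub^{2}=1$ in the same way. One small ordering point: you justify $X$ being invertible (i.e.\ $u\neq0$) by citing non-abelianness, which is not a hypothesis of the lemma; the clean route---which you in fact also write down---is to observe first that $a=sb=0$ and $c=-ub$ force $\det W=ub^{2}=1$, whence $u\neq0$ before $X^{-1}$ is ever needed.

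Regarding your third paragraph on complex $t\neq\pm1$ with $t^{N}=1$: the paper's proof likewise treats only $t=\pm1$. In context the lemma sits between the case $t\neq\pm1$ already handled and the Remark on $t=-1$ with $N$ odd, so ``$t^{N}=1$'' is being read over the reals (equivalently, $t\in\{\pm1\}$ with $t^{N}=1$); the stray complex roots of unity you worry about are simply outside the intended scope, and no separate appeal to the hyperbolic $SL(2,\mathbb{R})$ hypothesis is required at this point.
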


\begin{proof}
Suppose that there is some solution to these equations with $t=\pm 1.$ From $WC=DW$ we get that $a=0.$ From $\det(W) = 1$ we get the equation $ub^2=1.$ This tells us in particular that $u$ and $b$ must not vanish.

From the equation $C^{N}W_*W = \text{Id}$, we get, as before, that $C^{N}XW = W^{-1}X.$ The upper-left term of this equation reads $ub=-ub,$ thus there are no solutions.

\end{proof}

\begin{remark}
For $t = -1,$ with $N$ odd, there can be solutions to the equations $C^{N}W_*W = \text{Id}$ and $WC = DW$, however now they exist if and only if the equations $a = 0$ and $-Nub = 2d$ have a mutual solution $u.$
\end{remark}

\begin{lemma}
For $(t-t^{-1})^2 = u,$ there are no solutions to $C^{N}W_*W = \text{Id}$ and $WC = DW$.
\end{lemma}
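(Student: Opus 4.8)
The plan is to exploit the fact that the hypothesis $(t-t^{-1})^2 = u$ is exactly the condition $\det X = 0$, so the conjugation trick $W_* = XWX^{-1}$ that drove the earlier lemmas is no longer available. Instead I would work with the matrix identities $XC = DX$ and $XD = CX$, which hold for \emph{all} values of $t,u$ (they are simply the uninverted forms of the relations $XCX^{-1}=D$, $XDX^{-1}=C$), and which propagate along the defining words to give the identity $XW = W_* X$ whether or not $X$ is invertible.

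The central observation is that when $\det X = 0$ the matrix $X$ is nilpotent of rank one, with $\ker X = \operatorname{Im} X$ spanned by the single vector $v = \begin{pmatrix} 1 \\ t^{-1}-t \end{pmatrix}$, and that this same $v$ is a common eigenvector of $C$ and $D$: a short direct check gives $Cv = t^{-1}v$ and $Dv = tv$. Assuming that $WC = DW$ and $C^N W_* W = \mathrm{Id}$ admit a common solution, I would then extract two incompatible constraints on the vector $Wv$.

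First I would apply the knot-group relation to $v$: from $WC = DW$ and $Cv = t^{-1}v$ we obtain $D(Wv) = t^{-1}(Wv)$, so $Wv$ lies in the $t^{-1}$-eigenspace of $D$. Since we may assume $t \neq \pm 1$ (the case $u = 0$, i.e.\ $t = \pm 1$, produces only abelian representations and is excluded), this eigenspace is exactly the line spanned by $\begin{pmatrix} 0 \\ 1 \end{pmatrix}$. Second I would use the surgery relation: rewriting $C^N W_* W = \mathrm{Id}$ as $W_* = C^{-N}W^{-1}$ and substituting into the identity $XW = W_* X$ gives $XW = C^{-N}W^{-1}X$; right-multiplying by $v$ and using $Xv = 0$ yields $X(Wv) = 0$, so that $Wv \in \ker X = \langle v \rangle$.

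The two lines $\langle \begin{pmatrix} 0 \\ 1 \end{pmatrix}\rangle$ and $\langle v \rangle$ meet only at the origin, since $v$ has nonzero first coordinate; hence $Wv = 0$. But $\det W = 1$ forces $W$ to be invertible, so $v = 0$, a contradiction. I expect the main obstacle to be organizational rather than computational: the task is to resist inverting $X$ and instead route everything through the surviving identity $XW = W_* X$ and the common eigenvector $v$. Once the nilpotent structure of $X$ and the eigenrelations $Cv = t^{-1}v$, $Dv = tv$ are recorded, the two constraints on $Wv$ collide immediately and the contradiction is forced.
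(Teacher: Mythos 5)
Your argument is correct, and its opening move is exactly the paper's: since $XC=DX$ and $XD=CX$ hold identically in $t,u$, the relation $XW=W_*X$ survives even though $X$ is singular, so $C^NW_*W=\mathrm{Id}$ still yields $XW=C^{-N}W^{-1}X$. Where you diverge is the endgame. The paper feeds this matrix identity back into the coordinate computation of the earlier lemma and observes that it forces $t^N((t-t^{-1})^2-u)b^2=1$, whose left side vanishes under the hypothesis $(t-t^{-1})^2=u$ --- an immediate $0=1$. You instead exploit the structure that degeneracy creates: $X$ is trace-zero with $\det X=0$, hence rank-one nilpotent with $\ker X=\langle v\rangle$ for $v=(1,\,t^{-1}-t)^{T}$, and $v$ is simultaneously a $t^{-1}$-eigenvector of $C$ and (under the hypothesis) a $t$-eigenvector of $D$; then $WC=DW$ forces $Wv$ into the $t^{-1}$-eigenline $\langle(0,1)^{T}\rangle$ of $D$ while $XW=C^{-N}W^{-1}X$ forces $Wv\in\ker X=\langle v\rangle$, and the two lines meet only at $0$, contradicting $\det W=1$. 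Your version is self-contained and coordinate-free, at the cost of needing the eigenline of $D$ to be genuinely one-dimensional; the paper's version is a one-line corollary of work already done. The only loose end in yours is the case $t=\pm1$ (equivalently $u=0$ under the hypothesis), which you wave off as ``excluded'': the lemma as stated does not exclude it, so you should close it directly --- there $D=\pm\mathrm{Id}$ is central, so $WC=DW$ would force $C=\pm\mathrm{Id}$, which is false. With that sentence added the proof is complete.
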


\begin{proof}
Suppose there exists a solution. $X$ is not invertible, however it is still true that $XC=DX$ and $XD=CX,$ hence $W_*X = XW.$ Thus,  $C^NW_*W=\text{Id}$ still implies that $C^NXW = W^{-1}X,$ forcing $t^N((t-t^{-1})^2-u)b^2=1.$ This is clearly impossible, hence there cannot be a solution.
\end{proof}

For each $p,q,$ let $A_{p,q}(t,u)\in\mathbb{Z}[t, t^{-1}, u]$ denote the expression $a$ used above, $B_{p,q}(t,u)\in\mathbb{Z}[t, t^{-1}, u]$ denote the expression $b$ used above, and $D_{p,q}(t,u)\in\mathbb{Z}[t, t^{-1}, u]$ denote the expression $d$ used above.

\begin{definition}
The \emph{Riley polynomial} of a 2-bridge knot $K(p, q)$ is $A_{p,q}(t,u) - (t-t^{-1})B_{p,q}(t,u).$
\end{definition}

Note that the vanishing of the Riley polynomial at $(t, u)$ gives a concrete representation of the knot complement group in $SL(2, \mathbb{C}).$

We now prove the following theorem:

\begin{theorem}\label{2bridge}
Let $K=K(p,q)$ be a 2-bridge knot. For any $n\in\mathbb{Z},$ the fundamental group $\pi_1(S^3_{n}(K))$ admits a non-trivial $SL(2, \mathbb{R})$ representation if the system of equations: $$A_{p,q}(t, u) = (t - t^{-1}) B_{p,q}(t, u), $$$$t^{n-2\sigma}((t-t^{-1})^2-u)\left(B_{p,q}(t, u)\right)^2=1$$ have a solution $(t, u)\in\mathbb{R}^2$ with $t\notin\{-1, 0, 1\}$ or a solution with $t=e^{i\theta}, t\neq\pm1$ and $u\in(-\infty, -4\sin^2(\theta))\cup(0,\infty),$ or if $n$ is odd and the system of equations: $$A_{p,q}(-1, u) = 0,$$ $$-(n-2\sigma)uB_{p,q}(-1, u) = 2D_{p,q}(-1, u)$$ has a solution $u\in\mathbb{R}.$

Moreover, if $n=\pm1,$ then the existence of a solution to one of the above systems of equations is a necessary and sufficient condition for $\pi_1(S^3_{n}(K))$ to admit a non-trivial $SL(2, \mathbb{R})$ representation.
\end{theorem}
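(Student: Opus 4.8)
The plan is to build on the preceding lemmas, which already reduce the problem to algebra: a solution $(t,u)$ of the first system makes $a=(t-t^{-1})b$, i.e. $WC=DW$, so $\rho$ is a representation of the knot group, and the second equation $t^{N}((t-t^{-1})^2-u)b^2=1$ (with $N=n-2\sigma$) is, by the lemma valid for $t\neq\pm1$ and $(t-t^{-1})^2\neq u$, equivalent to $C^{N}W_*W=\text{Id}$, so $\rho$ descends to $\pi_1(S^3_n(K))\to SL(2,\mathbb{C})$; the second system is exactly the Remark's criterion $a=0$, $-Nub=2d$ for $t=-1$, $N$ odd. Note that for any solution the second equation forces $(t-t^{-1})^2\neq u$ automatically, so the lemma's hypotheses always hold. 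The only remaining work for sufficiency is to conjugate the image into $SL(2,\mathbb{R})$ and to check non-triviality. Non-triviality is immediate in every case: $\rho(x)=C$ has upper-right entry $1$, so it is never the identity, and conjugation preserves this.

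First I would dispose of the cases in which $\rho$ is already real. If $(t,u)\in\mathbb{R}^2$ then $C$ and $D$ are real, so $\rho$ lands in $SL(2,\mathbb{R})$ with no further work; this covers the real solutions of the first system with $t\notin\{-1,0,1\}$ and, using $N\equiv n\pmod 2$, the solutions of the second system where $t=-1$ and $n$ (equivalently $N$) is odd.

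The heart of the argument is the elliptic branch $t=e^{i\theta}$ with $t\neq\pm1$, where $C,D$ are genuinely complex. Here I would exhibit an explicit $\rho$-invariant Hermitian form and read off its signature. Solving $C^{*}HC=H$ and $D^{*}HD=H$ for a Hermitian $H$ (using that $u\in\mathbb{R}$) gives, up to real scale, $H=\begin{pmatrix} 2\sin\theta & -i \\ i & -2\sin\theta/u \end{pmatrix}$, with $\det H=-(u+4\sin^2\theta)/u$. When $\det H<0$ the form has signature $(1,1)$, so $\rho$ preserves an indefinite Hermitian form and is therefore conjugate in $SL(2,\mathbb{C})$ into $SU(1,1)\cong SL(2,\mathbb{R})$ (any two Hermitian forms of equal signature are equivalent, and a Cayley transform carries $SU(1,1)$ to $SL(2,\mathbb{R})$); when $\det H>0$ the form is definite and $\rho$ is conjugate into $SU(2)$ instead. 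The inequality $\det H<0$ reads $u(u+4\sin^2\theta)>0$, i.e. $u\in(-\infty,-4\sin^2\theta)\cup(0,\infty)$, matching the stated range exactly, with the excluded loci $u=0$ (where $H$ blows up, $\rho$ reducible) and $u=(t-t^{-1})^2=-4\sin^2\theta$ (where $\det H=0$) being precisely the cases the earlier lemmas forbid. I expect this signature computation to be the main obstacle, together with the bookkeeping that makes these degenerate loci drop out cleanly.

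Finally, for the necessity claim when $n=\pm1$, I would use that $S^3_{\pm1}(K)$ is an integer homology sphere, so $H_1=0$. Any reducible representation has upper-triangular image whose diagonal is a character $\pi_1\to\mathbb{C}^{\ast}$; such a character factors through $H_1=0$ and is trivial, forcing unipotent image and hence $t=1$, which the earlier lemma excludes from the surgery equations. Likewise every abelian representation is trivial. Thus a non-trivial $SL(2,\mathbb{R})$-representation is irreducible; pulling it back along $\pi_1(S^3-K)\twoheadrightarrow\pi_1(S^3_{\pm1}(K))$ and applying Riley's theorem puts it in the standard form for some $(t,u)$. The conjugation-invariant traces $\tr\rho(x)=t+t^{-1}$ and $\tr\rho(xy)=t^2+t^{-2}-u$ are real, so $t+t^{-1}$ and $u$ are real and hence $t$ is real or $|t|=1$. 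The lemmas rule out $t=1$ and $(t-t^{-1})^2=u$, send the case $t=-1$ (necessarily with $N$ odd) into the second system, and otherwise identify the relations $WC=DW$, $C^{N}W_*W=\text{Id}$ with the first system. On the branch $|t|=1$, $t\neq\pm1$, irreducibility makes the invariant Hermitian form unique up to real scale, so the $SL(2,\mathbb{R})$ hypothesis forces $\det H<0$ and places $u$ in the stated range, completing the equivalence.
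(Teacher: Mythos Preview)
Your argument is correct and follows the same overall architecture as the paper's proof: the earlier lemmas reduce $WC=DW$ and $C^{N}W_*W=\text{Id}$ to the two polynomial systems, and the remaining issue is the $SL(2,\mathbb{R})$-conjugacy criterion for representations in Riley's standard form. The paper handles that criterion by citing Khoi \cite[p.~786--787]{Khoi} as a black box in both directions, whereas you re-derive it by hand. For sufficiency on the branch $t=e^{i\theta}$ you write down the invariant Hermitian form $H=\begin{pmatrix}2\sin\theta & -i\\ i & -2\sin\theta/u\end{pmatrix}$ and read off that $\det H<0$ exactly when $u\in(-\infty,-4\sin^2\theta)\cup(0,\infty)$, which is precisely Khoi's condition. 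For necessity you replace the second appeal to Khoi by the trace argument ($\tr\rho(x)$ and $\tr\rho(xy)$ real forces $t$ real or $|t|=1$ and $u\in\mathbb{R}$) together with Schur-type uniqueness of the invariant form to pin down the sign of $\det H$. This is a genuinely more self-contained route and has the virtue of making transparent why the degenerate loci $u=0$ and $u=-4\sin^2\theta$ are exactly the boundary cases.

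Two small points to tighten. First, in the necessity direction Riley's theorem (as the paper invokes it in the proof) produces the standard form only up to a global sign $\alpha\in\{\pm1\}$; you should note that conjugation by $\operatorname{diag}(1,-1)\in GL(2,\mathbb{C})$ absorbs $\alpha=-1$ into $t\mapsto -t$, so one may take $\alpha=1$. Second, your phrase ``conjugation preserves this'' for non-triviality is slightly loose: what conjugation preserves is that $\rho(x)\neq\text{Id}$, not the specific entry. Neither affects the validity of the argument.
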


\begin{proof}
By \cite[p. 786-787]{Khoi}, a representation of $\langle x,y | wx=yw\rangle$ of the form $x\mapsto C, y\mapsto D$ is conjugate to a $SL(2, \mathbb{R})$ representation (Khoi discusses $SU(1,1)$ instead of $SL(2, \mathbb{R})$ but these groups are isomorphic) if and only if one of the 2 below conditions holds:

\begin{itemize}
	\item $t,u\in\mathbb{R}, t\neq0.$
	\item $t=e^{i\theta}, t\neq1, u\in(-\infty, -4\sin^2(\theta))\cup(0,\infty).$
\end{itemize}

Since every representation of $\pi_1(S^3_n(K))$ we discuss comes from quotient-ing a representation of $\langle x,y | wx=yw\rangle$ of the above form, the result in combination with our above lemmas provides us with the first part of the theorem.

We now prove the second part of the theorem. Sufficiency follows from the first part, so we focus on necessity. Suppose $\pi_1(S^3_{\pm1}(K))$ admits a non-trivial $SL(2, \mathbb{R})$ representation. Then, it must be non-abelian since $$\pi_1(S^3_{\pm1}(K))/[\pi_1(S^3_{\pm1}(K)), \pi_1(S^3_{\pm1}(K))] = H_1(S^3_{\pm1}(K)) = 0.$$ Furthermore, it induces an $SL(2, \mathbb{C})$ representation of $\pi_1(S^3 - K)$ via the maps: $$\pi_1(S^3 - K)\to\pi_1(S^3 - K)/\langle\mu^{\pm1}\lambda=1\rangle\cong\pi_1(S^3_{\pm1}(K))\to SL(2, \mathbb{R})\hookrightarrow SL(2, \mathbb{C})$$ which is evidently non-abelian. 

\cite[Theorem 1]{Rileynon} proves that every non-abelian $SL(2, \mathbb{C})$-representation of a 2-bridge knot group can be conjugated (by a matrix in $GL(2, \mathbb{C})$) to a scalar multiple of a representation sending $x$ to $C(t)$ and $y$ to $D(t,u)$ for some $t,u\in\mathbb{C}$ with $t\neq0.$ Hence, there exists $t,u\in\mathbb{C}$, $\alpha\in\{\pm1\}$ and a map $\rho:\pi_1(S^3 - K)\to SL(2, \mathbb{C})$ with $\rho(x) = \alpha C(t)$ and $\rho(y) = \alpha D(t,u)$. By further conjugating by the matrix $\begin{pmatrix} i & 0 \\ 0 & -i \\ \end{pmatrix}$ if necessary, we may assume $\alpha=1.$ Since this representation is conjugate to an $SL(2, \mathbb{R})$-representation, the aforementioned result from \cite{Khoi} tells us our solution must be of the form: \begin{itemize}
	\item $t,u\in\mathbb{R}, t\neq0.$
	\item $t=e^{i\theta}, t\neq1, u\in(-\infty, -4\sin^2(\theta))\cup(0,\infty).$
\end{itemize}

Furthermore, $\rho(\mu^{\pm1}\lambda)=\rho(1)$ since the map descends to a map on the quotient $\pi_1(S^3 - K)/\langle\mu^{\pm1}\lambda=1\rangle,$ so the equations $WC=DW$ and $C^{\pm1-2\sigma}W_*W=\text{Id}$ must be true. By our above lemmas, it is therefore not possible for $t$ to equal $1.$ If $t=-1,$ these matrix equations hold if and only if $$A_{p,q}(-1, u) = 0,$$ $$-(n-2\sigma)uB_{p,q}(-1, u) = 2D_{p,q}(-1, u).$$ For $t\neq-1,$ these equations hold if and only if $$A_{p,q}(t, u) = (t - t^{-1}) B_{p,q}(t, u), $$$$t^{n-2\sigma}((t-t^{-1})^2-u)\left(B_{p,q}(t, u)\right)^2=1,$$ and that suffices for the proof.
\end{proof}

\begin{corollary}
Let $K=K(p,q)$ be a non-torus 2-bridge knot.The surgery $S^3_{\pm1}(K)$ has a left orderable fundamental group if the system of equations: $$A_{p,q}(t, u) = (t - t^{-1}) B_{p,q}(t, u), $$$$t^{\pm1-2\sigma}((t-t^{-1})^2-u)\left(B_{p,q}(t, u)\right)^2=1$$ have a solution $(t, u)\in\mathbb{R}^2$ with $t\neq0$ or a solution with $t=e^{i\theta}, t\neq1$ and $u\in(-\infty, -4\sin^2(\theta))\cup(0,\infty),$ or if the system of equations: $$A_{p,q}(-1, u) = 0,$$ $$-(\pm1-2\sigma)uB_{p,q}(-1, u) = 2D_{p,q}(-1, u)$$ has a solution $u\in\mathbb{R}.$
\end{corollary}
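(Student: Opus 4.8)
The plan is to combine Theorem \ref{2bridge} with the Boyer--Rolfsen--Wiest theorem quoted in the introduction: since $M:=S^3_{\pm1}(K)$ is compact, orientable, and (as discussed below) irreducible, it suffices to exhibit a nontrivial left-orderable quotient of $\pi_1(M)$. By Theorem \ref{2bridge}, each of the three systems of equations appearing in the hypothesis produces a nontrivial homomorphism $\rho\colon\pi_1(M)\to SL(2,\mathbb{R})$. The remaining work is to upgrade $\rho$ to a homomorphism into a left-orderable group, and for this I would lift it to the universal cover $\widetilde{SL}(2,\mathbb{R})=\widetilde{PSL}(2,\mathbb{R})$ (the two coincide, since $SL(2,\mathbb{R})\to PSL(2,\mathbb{R})$ is a finite cover).

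First I would record the homological input. Because the surgery slope is $\pm1$, we have $H_1(M;\mathbb{Z})=\mathbb{Z}/1=0$, so $M$ is an integer homology sphere, and Poincar\'e duality gives $H^2(M;\mathbb{Z})\cong H_1(M;\mathbb{Z})=0$. The classifying map $c\colon M\to K(\pi_1 M,1)$ has simply connected homotopy fiber (the universal cover of $M$), so it is $2$-connected and therefore induces an injection $c^*\colon H^2(\pi_1 M;\mathbb{Z})\hookrightarrow H^2(M;\mathbb{Z})=0$. Hence $H^2(\pi_1 M;\mathbb{Z})=0$.

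Next I would lift $\rho$ across the covering. The map $\widetilde{SL}(2,\mathbb{R})\to SL(2,\mathbb{R})$ is a central extension $1\to\mathbb{Z}\to\widetilde{SL}(2,\mathbb{R})\to SL(2,\mathbb{R})\to 1$ with kernel $\pi_1(SL(2,\mathbb{R}))\cong\mathbb{Z}$, and the obstruction to lifting $\rho$ is the pullback under $\rho$ of the class of this extension, an element of $H^2(\pi_1 M;\mathbb{Z})$. By the previous paragraph this group vanishes, so a lift $\tilde\rho\colon\pi_1 M\to\widetilde{SL}(2,\mathbb{R})=\widetilde{PSL}(2,\mathbb{R})$ exists. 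Since $\tilde\rho$ composed with the covering recovers the nontrivial map $\rho$, the image $\tilde\rho(\pi_1 M)$ is a nontrivial subgroup of $\widetilde{PSL}(2,\mathbb{R})$, hence a nontrivial left-orderable quotient of $\pi_1 M$. Boyer--Rolfsen--Wiest then gives that $\pi_1(S^3_{\pm1}(K))$ is left-orderable.

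The step requiring the most care is verifying that $M$ is irreducible, which is what licenses the use of Boyer--Rolfsen--Wiest. Here I would use that a non-torus $2$-bridge knot is hyperbolic, together with the classification of exceptional surgeries on $2$-bridge knots, to rule out reducible fillings; equivalently, a reducible surgery would force $K$ to be cabled, contradicting hyperbolicity. The cohomological lifting is conceptually the heart of the argument, and it is exactly the mechanism that Section 3 develops in general for Theorem \ref{hyperbolic}; but in the present $\pm1$ case the vanishing $H^2(\pi_1 M;\mathbb{Z})=0$ makes the obstruction disappear automatically, so neither a continuous family nor any control of the Euler number is needed, and the argument applies uniformly to the real, elliptic ($t=e^{i\theta}$), and $t=-1$ solutions alike.
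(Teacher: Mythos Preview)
Your proof is correct and follows essentially the same route as the paper: invoke Theorem~\ref{2bridge} to produce a nontrivial $SL(2,\mathbb{R})$-representation, use the vanishing of $H^2(\pi_1 M;\mathbb{Z})$ for the integer homology sphere $M=S^3_{\pm1}(K)$ to lift it to $\widetilde{PSL}(2,\mathbb{R})$ (the paper packages this step as Theorem~\ref{zhs}), and then apply Boyer--Rolfsen--Wiest together with irreducibility from Hatcher--Thurston. One small caution: your alternative justification for irreducibility, ``a reducible surgery would force $K$ to be cabled,'' is the cabling conjecture, which is open in general; stick with the Hatcher--Thurston classification for $2$-bridge knots (as the paper does and as you also suggest first).
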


\begin{proof}
By the above theorem, if one of those systems of equations has a solution, the surgery $S^3_{\pm1}(K)$ admits a non-trivial $SL(2, \mathbb{R})$ representation. Since $S^3_{\pm1}(K)$ is an orientable integral homology 3-sphere and it is irreducible by \cite{HatcherThurston}, Theorem \ref{zhs}, which I will prove in the following section, tells us that it must have a left orderable fundamental group, as desired.
\end{proof}

\begin{remark}
It is rather remarkable that the 8 equations contained in $C^{N}W_*W = \text{Id}$ and $WC = DW$ reduce down to 2 equations. This is perhaps unsurprising in the wake of the fabulous result of Kronheimer and Mrowka in \cite{KM} using gauge theory that establishes that $\pi_1(S^3_1(K))$ admits a non-trivial $SO(3)$-representation for all knots $K\subset S^3.$
\end{remark}

\subsection{Rational Surgeries}

The above method also works to produce rational surgeries. Let $r\in\mathbb{Q}$ be a rational surgery slope, and pick coprime $\alpha,\beta\in\mathbb{Z}$ such that $r - 2\sigma = \alpha/\beta.$ For $t\neq\pm1,$ consider the matrix $$C_{\alpha/\beta} := \begin{pmatrix} t^{\alpha/\beta} & \frac{t^{\alpha/\beta} - t^{-\alpha/\beta}}{t - t^{-1}} \\ 0 & t^{-\alpha/\beta}\end{pmatrix}.$$ It is easy to show that $C_{\alpha/\beta}^\beta = C^\alpha.$

Suppose that the first equation $A_{p,q}(t, u) = (t - t^{-1}) B_{p,q}(t, u)$ is satisfied. Then, I claim that $C_{\alpha/\beta}$ commutes with $W_*W.$ We know that $C$ commutes with $W_*W$ since the first equation being satisfied tells us that $x, y \mapsto C, D$ defines a representation of $\pi_1(S^3 - K),$ and we know that the longitude $x^{-2\sigma}w_*w\in\pi_1(S^3 - K)$ commutes with $x.$ But, $C^\alpha$ and $C_{\alpha/\beta}$ are simultaneously diagonalizable since $C_{\alpha/\beta}^\beta = C^\alpha.$ The eigenvalues of $C_{\alpha/\beta}$ are $t^{\alpha/\beta}$ and $t^{-\alpha/\beta}$ and the eigenvalues of $C$ are $t^\alpha$ and $t^{-\alpha}.$ Therefore, a polynomial $p\in\mathbb{R}[x]$ such that $p(t^{\alpha}) = t^{\alpha/\beta}$ and $p(t^{-\alpha}) = t^{-\alpha/\beta}$ must satisfy $p(C^\alpha) = C_{\alpha/\beta}.$ Thus, $C_{\alpha/\beta}$ is expressible as a polynomial of the matrix $C^\alpha,$ so since $W_*W$ commutes with $C^\alpha,$ it must also commute with $C_{\alpha/\beta}.$

Since $C_{\alpha/\beta}$ commutes with $W_*W,$ then if the equation $C_{\alpha/\beta}W_*W = \text{Id}$ holds by exponentiating we get that $C^\alpha W_*W^\beta = \text{Id}$ holds. This tells us the representation of $\pi_1(S^3 - K)$ descends to the group of the $r$-surgery $\pi_1(S^3_r(K)).$ Proceeding with analogous computations to those done above, we arrive at the following theorem:

\begin{theorem}
Let $K=K(p,q)$ be a 2-bridge knot. For any $r\in\mathbb{Q},$ the fundamental group $\pi_1(S^3_{r}(K))$ admits a non-trivial $SL_2(\mathbb{R})$ representation if the system of equations: $$A_{p,q}(t, u) = (t - t^{-1}) B_{p,q}(t, u), $$$$t^{r-2\sigma}((t-t^{-1})^2-u)\left(B_{p,q}(t, u)\right)^2=1$$ have a solution $(t, u)\in\mathbb{R}^2$ with $t\notin\{-1, 0, 1\}$ or a solution with $t=e^{i\theta}, t\neq\pm1$ and $u\in(-\infty, -4\sin^2(\theta))\cup(0,\infty),$ or if $r,$ expressed in lowest terms, has odd numerator and denominator and the system of equations: $$A_{p,q}(-1, u) = 0,$$ $$-(r-2\sigma)uB_{p,q}(-1, u) = 2D_{p,q}(-1, u)$$ has a solution $u\in\mathbb{R}.$
\end{theorem}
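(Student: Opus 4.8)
The plan is to run the argument of Theorem \ref{2bridge} essentially verbatim, with the single integer relation $C^{N}W_*W=\text{Id}$ replaced by the rational surgery relation and the auxiliary matrix $C_{\alpha/\beta}$ introduced just above the statement. First recall the step that is unchanged: the assignment $x\mapsto C,\ y\mapsto D$ defines a representation of $\pi_1(S^3-K)$ precisely when the first equation $A_{p,q}=(t-t^{-1})B_{p,q}$, equivalently $WC=DW$, holds. The $r$-surgery group is obtained from $\pi_1(S^3-K)$ by adjoining the relator $\mu^{p_0}\lambda^{q_0}$, where $r=p_0/q_0$ in lowest terms, so the representation descends to $\pi_1(S^3_r(K))$ exactly when $\rho(\mu^{p_0}\lambda^{q_0})=\text{Id}$. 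Since the Seifert longitude commutes with the meridian, its image $C^{-2\sigma}W_*W$ commutes with $C$, and the relator evaluates to $C^{\,p_0-2\sigma q_0}(W_*W)^{q_0}=C^{\alpha}(W_*W)^{\beta}$ with $\alpha=p_0-2\sigma q_0$ and $\beta=q_0$; note $\alpha/\beta=r-2\sigma$ and $\gcd(\alpha,\beta)=1$.

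Next I would invoke the discussion preceding the statement: when the first equation holds, $C_{\alpha/\beta}$ is a real polynomial in $C^{\alpha}$ and hence commutes with $W_*W$, so a solution of $C_{\alpha/\beta}W_*W=\text{Id}$ yields $C^{\alpha}(W_*W)^{\beta}=\text{Id}$ upon raising to the $\beta$-th power, giving the required descent. It therefore suffices to solve $WC=DW$ together with $C_{\alpha/\beta}W_*W=\text{Id}$. The key observation is that $C_{\alpha/\beta}$ has exactly the upper-triangular shape of an integer power $C^{N}$, with diagonal $t^{\pm N}$ and $(1,2)$-entry $\tfrac{t^{N}-t^{-N}}{t-t^{-1}}$, but with the integer $N=n-2\sigma$ replaced by the rational $\alpha/\beta=r-2\sigma$. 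Consequently every computation in the integer-surgery lemmas transcribes line for line: when $(t-t^{-1})^2\neq u$ the matrix $X$ is invertible and $XWX^{-1}=W_*$, so $C_{\alpha/\beta}W_*W=\text{Id}$ becomes $C_{\alpha/\beta}XW=W^{-1}X$, whose four entries collapse (after substituting $a=(t-t^{-1})b$, $c=-ub$, and $\det W=1$) to the single equation $t^{\,r-2\sigma}((t-t^{-1})^2-u)B_{p,q}^2=1$. The degenerate cases are handled as before: $(t-t^{-1})^2=u$ and $t=1$ admit no solutions, while $t=-1$ requires the parity afforded by the hypothesis that $r$ have odd numerator and denominator, so that $\alpha,\beta$ are both odd, in which case one argues directly from $C^{\alpha}(W_*W)^{\beta}=\text{Id}$ and recovers the pair $A_{p,q}(-1,u)=0$ and $-(r-2\sigma)uB_{p,q}(-1,u)=2D_{p,q}(-1,u)$.

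Reality is then imposed exactly as in Theorem \ref{2bridge}: by \cite{Khoi}, the representation $x\mapsto C,\ y\mapsto D$ is conjugate into $SL(2,\mathbb{R})\cong SU(1,1)$ if and only if either $t,u\in\mathbb{R}$ with $t\neq0$, or $t=e^{i\theta}$ with $t\neq1$ and $u\in(-\infty,-4\sin^2\theta)\cup(0,\infty)$, which are precisely the two alternatives appearing in the statement. Non-triviality is automatic, since $t\notin\{0,\pm1\}$ forces $C\neq\pm\text{Id}$.

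The hard part will be the bookkeeping around the fractional exponent rather than any new geometric input. One must fix a branch of $t^{\alpha/\beta}$ so that $C_{\alpha/\beta}$ is genuinely well-defined, real where needed, and satisfies $C_{\alpha/\beta}^{\beta}=C^{\alpha}$; it is exactly this choice that allows the single matrix identity $C_{\alpha/\beta}W_*W=\text{Id}$ to stand in for the honest group relation $C^{\alpha}(W_*W)^{\beta}=\text{Id}$. I expect the only genuinely new care to lie in the transition values $t=\pm1$, where $C_{\alpha/\beta}$ degenerates and the argument must instead run through $C^{\alpha}(W_*W)^{\beta}=\text{Id}$ directly; there the odd-numerator/odd-denominator hypothesis is what guarantees that both $\alpha$ and $\beta$ are odd so the $t=-1$ computation closes up, mirroring the integer Remark with $N$ replaced by $r-2\sigma$. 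Finally, because only the sufficiency (``if'') direction is asserted here, no analogue of the necessity argument from the $n=\pm1$ case of Theorem \ref{2bridge} is required.
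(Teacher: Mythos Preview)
Your proposal is correct and follows essentially the same route as the paper: introduce $C_{\alpha/\beta}$, argue it commutes with $W_*W$ as a polynomial in $C^{\alpha}$, reduce $C_{\alpha/\beta}W_*W=\text{Id}$ to the single equation $t^{r-2\sigma}((t-t^{-1})^2-u)B_{p,q}^2=1$ by the same $X$-conjugation trick, and then invoke Khoi's reality criterion. The paper in fact says less than you do, simply writing ``Proceeding with analogous computations to those done above'' for the reduction step and the $t=\pm1$ degenerate cases; your added remarks about the parity of $\alpha,\beta$ in the $t=-1$ branch and about fixing a branch of $t^{\alpha/\beta}$ are appropriate caveats that the paper leaves implicit.
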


\begin{corollary}
Let $K=K(p,q)$ be a 2-bridge knot. For any $n\in\mathbb{Z} - \{0\},$ the surgery $S^3_{1/n}(K)$ has a left orderable fundamental group if the system of equations: $$A_{p,q}(t, u) = (t - t^{-1}) B_{p,q}(t, u), $$$$t^{\frac1n-2\sigma}((t-t^{-1})^2-u)\left(B_{p,q}(t, u)\right)^2=1$$ have a solution $(t, u)\in\mathbb{R}^2$ with $t\neq0$ or a solution with $t=e^{i\theta}, t\neq1$ and $u\in(-\infty, -4\sin^2(\theta))\cup(0,\infty),$ or if the system of equations: $$A_{p,q}(-1, u) = 0,$$ $$-(\frac1n-2\sigma)uB_{p,q}(-1, u) = 2D_{p,q}(-1, u)$$ has a solution $u\in\mathbb{R}.$
\end{corollary}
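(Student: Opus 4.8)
The plan is to reproduce the proof of the $S^3_{\pm 1}(K)$ corollary almost verbatim, substituting the rational surgery theorem above (specialized to the slope $r = 1/n$) for the integer surgery theorem. The arithmetic fact that makes the $1/n$ family behave exactly like the $\pm 1$ case is that $H_1(S^3_{p/q}(K);\mathbb{Z}) \cong \mathbb{Z}/p\mathbb{Z}$ depends only on the numerator of the slope; since $1/n$ has numerator $1$, every $1/n$-surgery on every knot is an integral homology $3$-sphere.

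First I would set $r = 1/n$ in the preceding rational surgery theorem. The exponent $r - 2\sigma$ becomes $1/n - 2\sigma$, so the two systems of equations appearing there are literally the two systems displayed in the corollary; hence a real solution $(t, u)$ of the prescribed type, or a real solution $u$ to the $t = -1$ system, yields a non-trivial $SL(2,\mathbb{R})$-representation of $\pi_1(S^3_{1/n}(K))$. One point to record carefully: the $t = -1$ alternative of the rational surgery theorem is available only when $r$, in lowest terms, has odd numerator and denominator. Since the numerator of $1/n$ is $1$, this alternative applies precisely when $n$ is odd---consistent with the fact that the matrix $C_{\alpha/\beta}$ underlying the construction is defined only for $t \neq \pm 1$, so that the value $t = -1$ must be handled separately.

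Finally I would check the hypotheses of Theorem \ref{zhs}: the manifold $S^3_{1/n}(K)$ is orientable (it is a Dehn filling of the orientable manifold $S^3 - K$), it is an integral homology $3$-sphere by the computation above, and it is irreducible by \cite{HatcherThurston}, exactly as invoked in the $\pm 1$ corollary. Theorem \ref{zhs} then upgrades the non-trivial $SL(2,\mathbb{R})$-representation to left-orderability of $\pi_1(S^3_{1/n}(K))$, which is the assertion. There is no serious obstacle beyond bookkeeping, since the entire content is inherited from the rational surgery theorem and from Theorem \ref{zhs}; the only steps demanding genuine attention are the parity caveat for the $t = -1$ case and confirming that the irreducibility citation indeed covers the knot at hand, so that Theorem \ref{zhs} truly applies.
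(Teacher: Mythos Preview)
Your proposal is correct and follows exactly the approach the paper intends: it is the verbatim analogue of the proof given for the $S^3_{\pm 1}(K)$ corollary, with the rational surgery theorem substituted in and the observation that $1/n$-surgery is still an integral homology sphere so that Theorem~\ref{zhs} (together with irreducibility from \cite{HatcherThurston} and Boyer--Rolfsen--Wiest) applies. Your remarks on the parity caveat for the $t=-1$ branch and on checking that the irreducibility citation covers the knot are appropriate; indeed the earlier corollary carries a ``non-torus'' hypothesis for exactly this reason, and the same qualification is implicitly needed here.
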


\section{Lifting Continuous Families of Representations}

In this section, we focus on how to lift representations $G\to SL(2, \mathbb{R})$ to the universal cover $\widetilde{PSL}(2, \mathbb{R}).$ The main technique is group cohomology. Recall that the second group cohomology $H^2(G; \mathbb{Z})$ is in 1-to-1 correspondence with equivalence classes of central extensions of $G$ by $\mathbb{Z}$, with the trivial element corresponding to a split extension (see \cite{groupcohomology} Section IV.3.) In fact, to each representation $\rho:G\to SL(2, \mathbb{R})$ we may associate a cohomology class $e(\rho)\in H^2(G; \mathbb{Z})$ which is trivial if and only if $\rho$ lifts to a map $\widetilde{\rho}:G\to \widetilde{PSL}(2, \mathbb{R})$. Following \cite[Section 3.5]{CD}, we call $e(\rho)$ the \emph{Euler class} in analogy to the Euler class of an oriented vector bundle.

To construct the Euler class of $\rho$ explicitly, we follow \cite{CD}, which essentially walks us through the correspondence between second group cohomology and equivalence classes of central extensions. Let $\sigma:G\to\widetilde{PSL}(2, \mathbb{R})$ be any section of $\rho.$ Then, for $g,h\in G,$ we know $$\sigma(g)\sigma(h)\sigma(gh)^{-1} \in \pi^{-1}(\text{Id}) = \mathcal{Z}(\widetilde{PSL}(2, \mathbb{R}))\cong\mathbb{Z}.$$ Consider the function $\varphi_\sigma:G\times G\to\mathbb{Z}$ sending $(g,h)$ to the value $\sigma(g)\sigma(h)\sigma(gh)^{-1}\in \mathbb{Z}.$ Intuitively, $\varphi_\sigma$ measures the failure of $\sigma$ to be a homomorphism. We can verify that $\varphi_\sigma$ is indeed a cocycle, and modulo a coboundary it is independent of the choice of section. Hence, we may define $e(\rho) := [\varphi_\sigma]\in H^2(G; \mathbb{Z}).$ It is clear that $e(\rho)$ vanishes if and only if there exists a lift of $\rho,$ since a lift of $\rho$ is a section that is a homomorphism.

Note also that for any lift $\widetilde{\rho},$ the function $\psi\cdot\widetilde{\rho}$ is also a lift where $\psi:G\to\mathbb{Z}$ is a homomorphism.

Luckily for us, the groups that we will be encountering in our study of three-manifolds have very simple group cohomology. The manifolds we will be dealing with are Dehn surgeries on knots, and their fundamental groups are closely related to knot groups. Indeed, the fundamental group $\pi_1(S^3_{p/q}(K))$ is isomorphic to $\pi_1(S^3 - K)/\langle\langle\mu^p\lambda^q\rangle\rangle,$ where $\langle\langle\mu^p\lambda^q\rangle\rangle$ refers to the subgroup normally generated by $\mu^p\lambda^q,$ where $\mu$ is the meridian of the knot and $\lambda$ is the Seifert longitude. We can compute their group cohomologies as follows:

\begin{theorem}\label{coh}
Let $p/q\in\mathbb{Q}$ be a nonzero surgery slope in lowest common terms, and suppose $K\subset S^3$ is a knot. Let $M = S^3_{p/q}(K)$. The second group cohomology $H^2(\pi_1(M); \mathbb{Z})$ is isomorphic to $\mathbb{Z}/p\mathbb{Z}.$
\end{theorem}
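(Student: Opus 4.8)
The plan is to reduce the computation of $H^2(\pi_1(M);\mathbb{Z})$ to a computation of the low-dimensional homology of the group $\pi := \pi_1(M)$, and then to pin down that homology using the topology of the closed oriented $3$-manifold $M$. First I would record the first homology. Writing $E = S^3\setminus N(K)$ for the knot exterior, we have $H_1(E;\mathbb{Z}) = \mathbb{Z}\langle\mu\rangle$ with the Seifert longitude $\lambda$ null-homologous; performing $p/q$-surgery glues in a solid torus killing $\mu^p\lambda^q$, which represents $p\mu$ in $H_1(E)$. Hence $H_1(M;\mathbb{Z}) = \mathbb{Z}/p\mathbb{Z}$, and consequently $H_1(\pi;\mathbb{Z}) = \pi^{\mathrm{ab}} = \mathbb{Z}/p\mathbb{Z}$.

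Next I would invoke the universal coefficient theorem for group cohomology, which gives a short exact sequence
$$0 \to \Ext(H_1(\pi;\mathbb{Z}),\mathbb{Z}) \to H^2(\pi;\mathbb{Z}) \to \Hom(H_2(\pi;\mathbb{Z}),\mathbb{Z}) \to 0.$$
Since $H_1(\pi;\mathbb{Z}) = \mathbb{Z}/p\mathbb{Z}$ we have $\Ext(\mathbb{Z}/p\mathbb{Z},\mathbb{Z}) = \mathbb{Z}/p\mathbb{Z}$, so it remains only to show that the right-hand term vanishes, for which it suffices to prove $H_2(\pi;\mathbb{Z}) = 0$.

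To compute $H_2(\pi;\mathbb{Z})$ I would use the standard comparison between the homology of a space and the homology of its fundamental group: for any connected CW complex $X$ the natural map $H_2(X;\mathbb{Z}) \to H_2(\pi_1(X);\mathbb{Z})$ is surjective (its kernel is the Hurewicz image of $\pi_2(X)$). Applying this with $X = M$, it is enough to show $H_2(M;\mathbb{Z}) = 0$. Because $M$ is a closed oriented $3$-manifold, Poincar\'e duality gives $H_2(M;\mathbb{Z}) \cong H^1(M;\mathbb{Z})$, and the universal coefficient theorem identifies $H^1(M;\mathbb{Z}) \cong \Hom(H_1(M;\mathbb{Z}),\mathbb{Z}) = \Hom(\mathbb{Z}/p\mathbb{Z},\mathbb{Z}) = 0$. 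Hence $H_2(M;\mathbb{Z}) = 0$, so $H_2(\pi;\mathbb{Z}) = 0$, and the sequence above collapses to $H^2(\pi;\mathbb{Z}) \cong \mathbb{Z}/p\mathbb{Z}$, as desired.

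The only genuinely delicate point is that $M$ need not be aspherical, so one cannot simply identify $H^*(\pi;\mathbb{Z})$ with $H^*(M;\mathbb{Z})$ — if it were a $K(\pi,1)$ the answer would follow immediately from Poincar\'e duality. The surjectivity of $H_2(M;\mathbb{Z}) \to H_2(\pi;\mathbb{Z})$, equivalently the exactness of the Hopf sequence $\pi_2(M) \to H_2(M;\mathbb{Z}) \to H_2(\pi;\mathbb{Z}) \to 0$, is exactly what lets us bypass asphericity. Everything else is a routine application of duality and universal coefficients; I would only double-check that the coprimality and nonvanishing hypotheses ($\gcd(p,q)=1$, $p\neq 0$) enter solely to guarantee that $M$ is a closed oriented rational homology sphere with $H_1(M;\mathbb{Z}) = \mathbb{Z}/p\mathbb{Z}$.
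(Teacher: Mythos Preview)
Your proof is correct and follows essentially the same route as the paper: both use the Hopf exact sequence (equivalently, the surjectivity of $H_2(M)\to H_2(\pi_1(M))$) together with Poincar\'e duality and the universal coefficient theorem to kill $H_2(\pi_1(M);\mathbb{Z})$, and then read off $H^2(\pi_1(M);\mathbb{Z})\cong\Ext(\mathbb{Z}/p\mathbb{Z},\mathbb{Z})\cong\mathbb{Z}/p\mathbb{Z}$ via UCT. Your write-up is a bit more explicit about where each hypothesis is used, but the argument is the same.
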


\begin{proof}
By Hopf's Theorem (see \cite[Theorem II.5.2]{groupcohomology},) we have that $$H_{2, group}(\pi_1(M), \mathbb{Z}) \cong H_{2}(M; \mathbb{Z})/h(\pi_2(M)),$$ where $h:\pi_2(M)\to H_2(M; \mathbb{Z})$ is the Hurewicz homomorphism. We know also by \cite{groupcohomology} (or the Hurewicz theorem) that $$H_{1, group}(\pi_1(M); \mathbb{Z}) = H_{1, singular}(M; \mathbb{Z}) = \mathbb{Z}/p\mathbb{Z}.$$ 

By the universal coefficient theorem and Poincar\'{e} duality, $H_2(M; \mathbb{Z})$ vanishes. Hence, by the universal coefficient theorem, $H^2_{group}(\pi_1(M), \mathbb{Z})$ is given by $\Ext(H_1(M; \mathbb{Z}), \mathbb{Z}) \cong \mathbb{Z}/p\mathbb{Z}.$ 
\end{proof}

\begin{theorem}\label{zhs}
Suppose $M$ is an integral homology 3-sphere (such as $1/n$ surgery on a knot), and there exists a nontrivial homomorphism $\pi_1(M)\to PSL(2, \mathbb{R})$ or $\pi_1(M)\to SL(2, \mathbb{R})$. Then this homomorphism lifts to $\widetilde{PSL}(2, \mathbb{R}).$
\end{theorem}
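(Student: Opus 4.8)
The plan is to reduce the statement to the vanishing of a single cohomological obstruction, namely the Euler class $e(\rho)$ constructed above, and then to observe that the group cohomology of a homology sphere is too small to carry any such obstruction. Concretely, I would first compute $H^2(\pi_1(M);\mathbb{Z})$ and show it is trivial, after which the lifting criterion ``$e(\rho)=0$ if and only if $\rho$ lifts'' finishes the argument immediately in both the $PSL(2,\mathbb{R})$ and $SL(2,\mathbb{R})$ cases.

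For the cohomology computation I would rerun the proof of Theorem \ref{coh} in the case of trivial first homology. Since $M$ is an integral homology $3$-sphere, $H_1(M;\mathbb{Z})=0$, and Poincaré duality together with the universal coefficient theorem give $H_2(M;\mathbb{Z})=0$. Hopf's formula then yields
\[ H_{2}^{\mathrm{group}}(\pi_1(M);\mathbb{Z}) \cong H_2(M;\mathbb{Z})/h(\pi_2(M)) = 0, \]
while $H_1^{\mathrm{group}}(\pi_1(M);\mathbb{Z}) = H_1(M;\mathbb{Z}) = 0$. Feeding these into the universal coefficient theorem for group cohomology gives
\[ H^2(\pi_1(M);\mathbb{Z}) \cong \Hom(H_2^{\mathrm{group}},\mathbb{Z}) \oplus \Ext(H_1^{\mathrm{group}},\mathbb{Z}) = 0. \]
In the special case $M=S^3_{1/n}(K)$ this is literally the instance of Theorem \ref{coh} with $|H_1|=1$, since then $\mathbb{Z}/p\mathbb{Z}$ is trivial.

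With $H^2(\pi_1(M);\mathbb{Z})=0$ in hand, the $PSL(2,\mathbb{R})$ case is immediate: the Euler class of $\rho:\pi_1(M)\to PSL(2,\mathbb{R})$ lies in the trivial group, so it vanishes, and by the criterion recalled above $\rho$ lifts to a homomorphism $\widetilde\rho:\pi_1(M)\to\widetilde{PSL}(2,\mathbb{R})$. For a representation $\rho:\pi_1(M)\to SL(2,\mathbb{R})$ I would either apply the Euler-class construction directly to the central extension with kernel $\ker\big(\widetilde{PSL}(2,\mathbb{R})\to SL(2,\mathbb{R})\big)$, which is again infinite cyclic so that the obstruction still lives in $H^2(\pi_1(M);\mathbb{Z})=0$, or else compose with the projection $SL(2,\mathbb{R})\to PSL(2,\mathbb{R})$ to obtain $\bar\rho$, lift $\bar\rho$ as above, and then verify that the resulting lift genuinely covers $\rho$ rather than a sign-twist of it.

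The one place needing genuine care---the main obstacle---is this last compatibility check in the $SL(2,\mathbb{R})$ case. A lift $\widetilde\rho$ of the projectivized representation $\bar\rho$ a priori satisfies only $\pi_{SL}\circ\widetilde\rho = \epsilon\cdot\rho$ for some homomorphism $\epsilon:\pi_1(M)\to\{\pm I\}$, where $\pi_{SL}:\widetilde{PSL}(2,\mathbb{R})\to SL(2,\mathbb{R})$ is the covering projection; indeed one checks directly that $g\mapsto(\pi_{SL}\circ\widetilde\rho)(g)\,\rho(g)^{-1}$ is a homomorphism into the central subgroup $\{\pm I\}$. But $\{\pm I\}$ is abelian, so $\epsilon$ factors through $H_1(M;\mathbb{Z})=0$ and is therefore trivial; equivalently, $\pi_1(M)$ is perfect. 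Hence $\pi_{SL}\circ\widetilde\rho=\rho$ and $\widetilde\rho$ is a bona fide lift. This is precisely why the hypothesis that $M$ be an integral homology sphere, rather than a general rational homology sphere, is exactly what forces every obstruction to disappear.
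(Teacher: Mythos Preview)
Your proposal is correct and follows essentially the same line as the paper's proof: compute $H^2(\pi_1(M);\mathbb{Z})=0$ via Hopf's formula, Poincar\'e duality, and the universal coefficient theorem (this is exactly the content of Theorem~\ref{coh} specialized to $p=1$), and conclude that the Euler class of $\rho$ vanishes, giving the lift. The paper handles both the $PSL(2,\mathbb{R})$ and $SL(2,\mathbb{R})$ cases in a single sentence by applying the Euler-class obstruction directly to each central $\mathbb{Z}$-extension; your more careful alternative of projecting to $PSL(2,\mathbb{R})$, lifting, and then killing the sign-twist $\epsilon$ via $H_1(M;\mathbb{Z})=0$ is also fine and makes explicit a point the paper leaves implicit.
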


\begin{proof}
By the computation of Theorem \ref{coh}, we have that $H^2_{group}(\pi_1(M), \mathbb{Z})$ vanishes. Hence, the Euler class of any map $\pi_1(M)\to PSL(2, \mathbb{R})$ or $\pi_1(M)\to SL(2, \mathbb{R})$ vanishes, so they lift to $\widetilde{PSL}(2, \mathbb{R}).$
\end{proof}

Now, we are ready to prove Theorem \ref{hyperbolic}:

\begin{proof}[Proof of Theorem \ref{hyperbolic}]
Let $t_0\in[0,1]$ such that $f(t_0)=1/n$ for some $n\in\mathbb{Z}.$ Note that $\rho_{t_0}:\pi_1(S^3 - K)\to SL(2, \mathbb{R})$ must lift to a representation $\widetilde{\rho}_{t_0}:\pi_1(S^3 - K)\to \widetilde{PSL}(2, \mathbb{R})$ since $H^2_{group}(\pi_1(S^3 - K); \mathbb{Z})$ is trivial as in the previous chapter. Furthermore, we know by hypothesis that $\rho_{t_0}$ descends to a representation of $\pi_1(S^3_{1/n}(K)) \cong \pi_1(S^3 - K)/\langle\langle\mu\lambda^n\rangle\rangle.$ This representation also lifts to $\widetilde{PSL}(2, \mathbb{R})$ since $H^2(\pi_1(S^3_{1/n}(K)); \mathbb{Z})$ is trivial from our computation in the previous chapter. This tells us that we can choose $\widetilde{\rho}_{t_0}(\mu\lambda^n)$ to be $1_{\widetilde{PSL}(2, \mathbb{R})}.$ 

Let $\pi:\widetilde{PSL}(2, \mathbb{R})\to SL(2, \mathbb{R})$ be the projection homomorphism and let $s\in\widetilde{PSL}(2, \mathbb{R})$ be a generator of the center $\mathcal{Z}(\widetilde{PSL}(2, \mathbb{R})) = \pi^{-1}(\text{Id}).$ Now, for each $g\in\pi_1(S^3 - K)$ there exists a unique path $\widetilde{\rho}_t(g):[0, 1]\to\widetilde{PSL}(2, \mathbb{R})$ such that $\widetilde{\rho}_{t_0}(g)$ is specified as above, by the homotopy lifting property. I wish to show that for each fixed $t$, that $$\widetilde{\rho}_t:\pi_1(S^3 - K)\to \widetilde{PSL}(2, \mathbb{R})$$ is a homomorphism of groups. If $g,h\in\pi_1(S^3 - K),$ then the function $f:[0, 1]\to \widetilde{PSL}(2, \mathbb{R})$ given as: $$t\mapsto \widetilde{\rho}_t(g)\widetilde{\rho}_t(h)\widetilde{\rho}_t(gh)^{-1}$$ is clearly continuous. Since $\pi\circ f(t) = \text{Id},$ the image of $f$ is the center $\mathcal{Z}(\widetilde{PSL}(2, \mathbb{R})),$ which is discrete. Hence, $f(t)=f(0)=\text{Id}$ for all $t,$ proving my claim. Hence, we have a continuous family of representations $\rho_t:\pi_1(S^3 - K)\times[0, 1]\to \widetilde{PSL}(2, \mathbb{R})$ such that for each slope $p/q\in f([0, 1])\cap\mathbb{Q}$ there exists $t$ such that $\widetilde{\rho}_{t}(\mu^p\lambda^q) \in \mathcal{Z}(\widetilde{PSL}(2, \mathbb{R})).$

Recall that if $\widetilde{\rho}:\pi_1(S^3 - K)\to \widetilde{PSL}(2, \mathbb{R})$ is a lift of a representation $\rho:\pi_1(S^3 - K)\to SL(2, \mathbb{R})$, then so is $\varphi\cdot\widetilde{\rho}$ where $\varphi\in\Hom(\pi_1(S^3 - K), \mathcal{Z}(\widetilde{PSL}(2, \mathbb{R})))$ (see \cite[Section 3.4]{CD} for instance.) Note further that $\pi_1(S^3 - K)$ abelianizes to $H_1(S^3 - K) \cong\mathbb{Z},$ which is generated by $\mu.$ Hence, all lifts are the same on $\lambda,$ as $\lambda$ is trivial in the abelianization of $\pi_1(S^3 - K).$

By continuity, $\widetilde{\rho}_{t}(\mu\lambda^n)$ must lie in the connected component of hyperbolic elements of $\widetilde{PSL}(2, \mathbb{R})$ containing the identity $1_{\widetilde{PSL}(2, \mathbb{R})}.$ Call this component $U.$ By Khoi in \cite[p. 764]{Khoi}, each element in this connected component lies in a subgroup $G_t$ of $\widetilde{PSL}(2, \mathbb{R})$ isomorphic as a topological group to $\mathbb{R},$ which is contained in $U.$ Furthermore, Khoi also shows that $\widetilde{\rho}_{t}(\mu)$, as it commutes with $\widetilde{\rho}_{t}(\mu\lambda^n)$, can be assumed to also lie in $s^k$ times an element of this subgroup. There exists a function $\varphi\in\Hom(\pi_1(S^3 - K), \mathcal{Z}(\widetilde{PSL}(2, \mathbb{R})))$ such that $\varphi(\mu) = s^{-k}.$ Hence, if we multiply by the appropriate $\varphi\in\Hom(\pi_1(S^3 - K), \mathbb{Z}),$ we may assume that $\widetilde{\rho}_{t}(\mu)$ is also in $G_t.$

Similarly, $\widetilde{\rho}_{t}(\lambda)$ commutes with $\widetilde{\rho}_{t}(\mu\lambda^n)$ so it can be assumed to lie in $s^k$ times an element of $G_t$ for some $k.$ Since $\widetilde{\rho}_{t}(\lambda^n)$ is in $G_t,$ we must have $s^{kn}=1$ so $k=0,$ telling us $\widetilde{\rho}_{t}(\lambda)\in G_t.$ If $\rho_{t}(\mu^p\lambda^q) = \text{Id},$ then we must have $$\widetilde{\rho}_{t}(\mu^p\lambda^q)\in \mathcal{Z}(\widetilde{PSL}(2, \mathbb{R}))\cap G_t = \{1_{\widetilde{PSL}(2, \mathbb{R})}\},$$ showing that there exists a non-abelian $\widetilde{PSL}(2, \mathbb{R})$-representation of $\pi_1(S^3_{p/q}(K))$ when $p/q\in f([0, 1])\cap\mathbb{Q}.$ That suffices for the proof.
\end{proof}

\section{The Knot $6_2$}

The knot $6_2$ is the double-twist knot $C(3, 4)$, or the 2-bridge knot $K(11, 3).$ From the survey of known results, we know that the fundamental group of surgery on this knot is left-orderable when the slope is in the range $(-\infty, 3),$ and this is shown using a continuous family of elliptic representations in \cite{Tran}. 

Here, I demonstrate the power of the method I have outlined in the previous two sections by proving that any slope in the range $(-4, 8)$ on $K(11, 3)$ has left-orderable fundamental group by exhibiting a continuous family of hyperbolic representations. Note that $0$-surgery on $K(11,3)$ has $\mathbb{Z}$-first homology so it is left-orderable by the Boyer-Rolfsen-Wiest Theorem. Hence, we focus on non-zero surgeries in this interval.

We introduce a bit more notation; we define the following polynomials which appeared in the statement of Theorem \ref{2bridge}: $$P(t,u) = A_{p,q}(t, u) - (t - t^{-1}) B_{p,q}(t, u), $$$$Q(t, u) = t^{n-2\sigma}((t-t^{-1})^2-u)\left(B_{p,q}(t, u)\right)^2-1.$$

\begin{theorem}
For any surgery in the interval $(-4, 0]$ on $K$, there exists a non-abelian $SL_2(\mathbb{R})$ representation of the fundamental group.
\end{theorem}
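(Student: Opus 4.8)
The plan is to invoke Theorem \ref{2bridge} with $K = K(11,3)$ and produce, for each surgery slope $r \in (-4,0]$, an explicit real solution to the system $P(t,u)=0$, $Q(t,u)=0$. First I would compute the relevant combinatorial data for $K(11,3)$: the signs $e_i = (-1)^{\lfloor 3i/11\rfloor}$ for $i=1,\dots,10$, the word $w$, and the constant $\sigma = \sum_{i=1}^{10} e_i$, so that $N = n - 2\sigma$ is known explicitly. From the presentation I would then multiply out the matrix $W = C^{e_1}D^{e_2}\cdots C^{e_{9}}D^{e_{10}}$ symbolically to extract the polynomials $A_{11,3}(t,u)$, $B_{11,3}(t,u)$, and $D_{11,3}(t,u)$ in $\mathbb{Z}[t,t^{-1},u]$, which is a finite (if tedious) computation. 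This reduces the theorem to a concrete statement about when two explicit real-algebraic equations admit a common real root for a given exponent $r - 2\sigma$.

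With $P$ and $Q$ in hand, the strategy is to treat them as a parameterized family and find a continuous curve of solutions rather than solving slope-by-slope. The first equation $P(t,u)=0$ is the classical Riley polynomial and is independent of the slope $r$; it cuts out the $SL(2,\mathbb{C})$ character variety curve of $K(11,3)$. I would parameterize a real branch of $P=0$ — most naturally by solving for $u$ as a function of $t$ (the Riley polynomial of $K(11,3)$ has degree small enough in $u$ that this is feasible, or one can use $t = e^{i\theta}$ on the unit circle to land in Khoi's elliptic/hyperbolic admissibility region). Substituting this branch into $Q(t,u)=0$ leaves a single equation relating $t$ (equivalently $\theta$) to the slope $r$ through the factor $t^{\,r-2\sigma}$. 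Solving for $r$ gives a function $r = r(t)$, and the task becomes showing that as $t$ ranges over an appropriate real interval (or arc of the unit circle), $r(t)$ sweeps out all of $(-4,0]$ while the solution $(t,u)$ stays inside the admissible region from Khoi: either $t,u \in \mathbb{R}$ with $t \neq 0$, or $t = e^{i\theta}$ with $u \in (-\infty, -4\sin^2\theta) \cup (0,\infty)$.

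The main obstacle I expect is the last step: verifying that the continuous curve of solutions both covers the entire target interval $(-4,0]$ of slopes and remains admissible throughout. Establishing the endpoint behavior — that the slope function limits to $-4$ and reaches $0$ as the parameter traverses its range — will require a careful analysis of the asymptotics of the transcendental relation $t^{\,r-2\sigma}(\cdots)=1$, likely by studying the monotonicity of $r(t)$ and its limits at the endpoints of the parameter interval. I anticipate that for this knot the relevant branch lives on the unit circle $t = e^{i\theta}$ (hyperbolic representations in Khoi's sense), so the admissibility check amounts to confirming the inequality $u < -4\sin^2\theta$ or $u > 0$ holds along the chosen arc; this inequality, together with monotonicity of $r(\theta)$, is where the real work lies. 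Once monotonicity and the limiting slope values are pinned down, the intermediate value theorem delivers a solution for every $r \in (-4,0]$, and Theorem \ref{2bridge} converts each such solution into the desired non-abelian $SL(2,\mathbb{R})$ representation, completing the argument.
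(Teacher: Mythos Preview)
Your overall strategy---parameterize a branch of $\{P=0\}$, substitute into $Q=0$ to express the slope as a continuous function of the parameter, and use the intermediate value theorem on the endpoints---is exactly what the paper does. Your first instinct, to solve for $u$ as a function of real $t$, is the correct one.

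Where you go off track is in your final anticipation: the relevant branch is \emph{not} on the unit circle, and your terminology is reversed. When $t=e^{i\theta}$ the meridian image $C$ has eigenvalues $e^{\pm i\theta}$ and is \emph{elliptic}; it is the real branch $|t|>1$ that gives hyperbolic peripheral elements. The paper works entirely with real $t\in[t_{\min},\infty)$, where $t_{\min}=\tfrac12\bigl(\sqrt{(\sqrt5-1)/2}+\sqrt{(\sqrt5+7)/2}\bigr)$, and shows by a sign change (computing $P(t,0)\le 0$ and $P(t,-t^{-4})>0$) together with $\partial P/\partial u\ne 0$ that there is a continuous $u=\psi(t)\in(-t^{-4},0]$ on this ray. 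Since $\sigma=2$ for $K(11,3)$, the second equation becomes $t^{\,n-4}\bigl((t-t^{-1})^2-\psi(t)\bigr)B_{11,3}(t,\psi(t))^2=1$; from $\psi(t)=O(t^{-4})$ one gets $B_{11,3}(t,\psi(t))\sim t^3$, so the left side grows like $t^{\,n+4}$ and hence $n(t)\to -4$ as $t\to\infty$, while $n(t_{\min})=0$ by direct evaluation. The real parameterization is not incidental: the hyperbolicity of the peripheral restriction is precisely what is needed downstream to invoke Theorem~\ref{hyperbolic}, so committing to the unit-circle branch would prove this statement in isolation (if the analysis went through) but would not support the lifting argument that follows.
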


\begin{proof}
Step 1. For $t \geq \frac12\left(\sqrt{\frac{\sqrt{5}-1}2} + \sqrt{\frac{\sqrt{5}+7}2}\right)$ then there exists a continuous function $\psi(t)$ with $-t^{-4}<\psi(t)\leq 0$ for all $t$ such that $P(t, \psi(t))=0.$

Call $t_{\text{min}} = \frac12\left(\sqrt{\frac{\sqrt{5}-1}2} + \sqrt{\frac{\sqrt{5}+7}2}\right).$ For $t \geq t_{\text{min}}$, we compute directly that $P(t, 0)$ is nonpositive and that $P(t, -t^{-4})$ is positive. Indeed, we can find their roots of $P(t, 0)$ explicitly since it is expressible as a polynomial in $u$ and $(t-t^{-1})^2$ of degree $\leq4.$ For $P(t, -t^4)$, it is immediately positive for all $t>1$ (note $t_{\text{min}}>1$.) Hence, we may define $\psi(t)$ to be the smallest solution $u\in (-t^{-4}, 0]$ to $P(t, u)=0.$

To show $\psi(t)$ is continuous, it therefore suffices by the implicit function theorem to show that the partial derivative $\pder{P}{u}$ is non-vanishing in the strip $t > t_{\text{min}},$ $0\geq u\geq -t^{-4}.$ But, we compute this explicitly, letting $y = (t-t^{-1})^2$: $$\pder{P}{u} = (-y^4-5y^3-8y^2-2y+1)+2u(4y^3+15y^2+16y+3)-3u^2(6y^2+15y+8)+4u^3(4y+5)-5u^4.$$ This is evidently negative for $u\leq0$ and $t>t_{\text{min}}.$

Now, the function $n:(t_{\text{min}}, \infty)\to\mathbb{R}$ determined by $$t^{n(t)-4}(t^2+t^{-2}-\psi(t)-2)B_{11, 3}^2(t, \psi(t)) = 1$$ is clearly well-defined and continuous since for $t\in(t_{\text{min}}, \infty)$ the left-hand side does not vanish (indeed $B_{p, q}(t, u)$ never vanishes when $P(t, u)=0$ since this would imply $A_{p, q}(t, u)$ also vanishes which would imply that the determinant of $W$ vanishes, a contradiction.)

Moreover, since $\psi(t) = O(t^{-4})$ as $t\to\infty,$ then $$B_{11, 3}(t, \psi(t)) = t^3 - \psi(t)t^5 + (2 \psi(t)+3\psi^2(t))t^3 - (3\psi^3(t)+4\psi^2(t)+5\psi(t)+1)t = t^3(1 + o(1)).$$

Hence, $(t^2+t^{-2}-\psi(t)-2)B_{11, 3}^2(t, \psi(t)) = t^8(1+o(1))$, which proves that $\lim_{t\to\infty} n(t) = -4.$ Since $\psi(t_{\text{min}}) = 0$ we may directly compute that $n(t_{\text{min}}) = 0,$ so since $n(t)$ is continuous, it assumes every value in the interval $(-4, 0].$
\end{proof}

\begin{theorem}
For any surgery in the interval $(0, 8)$ on $K$, there exists a non-abelian $SL_2(\mathbb{R})$ representation of the fundamental group.
\end{theorem}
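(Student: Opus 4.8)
The plan is to mirror the structure of the preceding theorem that handled the interval $(-4, 0]$, but now producing representations whose surgery coefficient $n(t)$ sweeps out the positive interval $(0, 8)$ rather than the negative one. The key realization is that the two free boundary values of the continuous function $n(t)$ are controlled by the two endpoints of the allowable range for the parameter $t$ (or $u$): one endpoint gives $n = 0$ and the other should give $n = 8$. Since the previous theorem already established that $n(t_{\text{min}}) = 0$ when $\psi(t_{\text{min}}) = 0$, I expect that the value $n = 8$ will arise from a different limiting regime, most plausibly as $t \to t_{\text{min}}^+$ from the \emph{other} branch, or by letting the parameter $t$ approach $1$ where the factor $t^{n-4}$ degenerates, or by passing to the elliptic regime $t = e^{i\theta}$ permitted by Theorem \ref{2bridge}. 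I would first determine experimentally (via the explicit polynomials $P$ and $B_{11,3}$) which parameter regime produces large positive $n$, and then set up the continuous family accordingly.

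First I would construct a continuous function $u = \psi(t)$ (or a path in the allowable parameter region) along which $P(t, u) = 0$, exactly as in the previous proof, invoking the implicit function theorem and verifying that $\partial P / \partial u$ is nonvanishing along the chosen branch. Here the crucial difference is the \emph{sign regime}: whereas the $(-4, 0]$ proof used $u \in (-t^{-4}, 0]$ with $t$ large, the positive-surgery interval will require either $u > 0$ (which by Theorem \ref{2bridge} still yields a genuine $SL(2,\mathbb{R})$ representation when $t$ is real) or the elliptic alternative $t = e^{i\theta}$ with $u \in (-\infty, -4\sin^2\theta) \cup (0, \infty)$. Once the branch $\psi$ is fixed, I would define the surgery function implicitly by
\[
t^{n(t) - 4}\bigl(t^2 + t^{-2} - \psi(t) - 2\bigr) B_{11,3}^2(t, \psi(t)) = 1,
\]
check that the left-hand side is nonvanishing (again because $B_{11,3}$ cannot vanish on the zero locus of $P$, since that would force $\det W = 0$), solve for $n(t)$, and confirm continuity.

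The final step is the intermediate-value argument: having shown $n(t)$ is continuous on the relevant parameter interval, I would compute its values at the two endpoints, establishing that $n$ attains $0$ at one end and $8$ (or a supremum of $8$, with the interval understood as open) at the other, so that $n(t)$ assumes every value in $(0, 8)$. This requires a careful asymptotic analysis of $B_{11,3}(t, \psi(t))$ and of the factor $t^{n-4}$ in whichever limiting regime produces the endpoint value $8$; in particular I would extract the leading-order behavior of $\psi(t)$ near that endpoint and substitute it into the defining equation for $n(t)$, solving $t^{n-4}(\cdots) = 1$ for the limiting exponent.

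The main obstacle I anticipate is identifying and controlling the correct parameter regime that yields $n \to 8$, together with the accompanying asymptotics. In the negative-surgery case the analysis was clean because $\psi(t) = O(t^{-4})$ as $t \to \infty$ gave $B_{11,3}^2 \sim t^6$ and $(t^2 + \cdots) \sim t^2$, forcing $t^{n-4} \cdot t^8 \to 1$ and hence $n \to -4$. For the positive interval the endpoint $8$ strongly suggests a regime where the product of the non-exponential factors scales like $t^{4}$ rather than $t^8$ (so that $t^{n-4} \cdot t^4 \to 1$ gives $n \to 0$) at one end and like a vanishing or bounded quantity forcing $n \to 8$ at the other; pinning down exactly which degeneration of $\psi$, $t$, or the elliptic parameter $\theta$ achieves this — and verifying that the representation remains genuinely in $SL(2,\mathbb{R})$ and non-abelian throughout, via the Khoi criterion invoked in Theorem \ref{2bridge} — is where the real work lies. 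The routine verifications (continuity via the implicit function theorem, non-vanishing of $B_{11,3}$) I expect to go through by direct computation as before.
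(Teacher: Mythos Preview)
Your outline is structurally sound, but the proposal is missing exactly the ingredient you yourself flag as the obstacle: the identification of the branch and the asymptotics that produce $n\to 8$. The paper does \emph{not} pass to the elliptic regime, nor to $t\to 1$, nor to a different side of $t_{\text{min}}$. It stays in the real regime $t>1$, $u>0$, and again lets the parameter go to infinity --- but on a \emph{different branch} of $\{P=0\}$ than the one used for $(-4,0]$. The key observation you are missing is Riley's identity $P(t,(t-t^{-1})^2)=1$ (valid for all two-bridge knots), which, together with the computation that $P(t,(t-t^{-1})^2-1)<0$ for $t\ge\tfrac{1+\sqrt5}{2}$, pins down a solution branch lying in the narrow strip $(t-t^{-1})^2-1<u<(t-t^{-1})^2$. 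The paper parameterizes this branch by $u$ (writing $t=\varphi(u)$) and checks smoothness via $\partial P/\partial t\neq 0$, not $\partial P/\partial u$.

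The reason this branch gives $n\to 8$ rather than $n\to -4$ is that proximity to the curve $u=(t-t^{-1})^2$ drastically changes the size of both factors in the defining equation. Writing $x=(t-t^{-1})^2-u$, the paper sharpens the strip to $0<x<t^{-2}$ for large $t$, so the factor $(t-t^{-1})^2-u=x=O(t^{-2})$, and one computes $B_{11,3}(t,(t-t^{-1})^2-x)=O(t^{-1})$ on this strip (contrast with $B_{11,3}\sim t^3$ on the $u\approx 0$ branch). Hence the non-exponential product is $O(t^{-4})$, forcing $t^{n-4}\sim t^{4}$ and $n\to 8$. Your guess that the product should scale like $t^4$ at one end is therefore off by a sign in the exponent; the mechanism is that the product \emph{decays} like $t^{-4}$ at $t\to\infty$, and the endpoint $n=0$ comes from $u=0$ (which is the same point $(t_{\text{min}},0)$ where the two branches meet). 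Once you have this branch and these asymptotics, the rest of your plan (non-vanishing of $B$, continuity, intermediate value theorem) goes through exactly as you describe.
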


\begin{proof}
Step 1. For $u\geq0$ then there exists a continuous function $\varphi(u)$ with $$\frac12\left(\sqrt{u}+\sqrt{u+4}\right)<\varphi(u)<\frac12\left(\sqrt{u+1}+\sqrt{u+5}\right)$$ such that $P(\varphi(u), u)=0.$

By Riley \cite[Lemma 4]{Rileynon}, we know that $P(t, (t-t^{-1})^2) = 1$ for all knots. Hence, setting $t = \frac12\left(\sqrt{u}+\sqrt{u+4}\right),$ we get that $P(t, u) = 1.$ We can also compute directly that $P(t, (t-t^{-1})^2-1)$ is negative whenever $t\geq \frac{1+\sqrt{5}}2,$ hence for $u\geq0,$ setting $t = \frac12\left(\sqrt{u+1}+\sqrt{u+5}\right),$ we get that $P(t, u)<0.$ Thus, we have shown there exists a solution for each fixed $u$ in the desired range. By the implicit function theorem, it suffices to show $\pder Pt\neq0$ in that range. But, we can compute that for all $t,$ we have $$\pder Pt(t, (t-t^{-1})^2-x) = -(8x^3+30x^2+32x+8)(t^3-t^{-5})+(8x^4+46x^3+92x^2+68x+14)(t-t^{-3}),$$ which I claim is negative whenever $t>1$ and $(t-t^{-1})^2\geq x.$ Indeed, when $(t-t^{-1})^2= x$ this expression simplifies to $-4(t^3-t^{-5})+6(t-t^{-3})$ which is negative whenever $t>1$; then, when we increase $t$ and fix $x$ the expression clearly decreases.

Thus, $\varphi(u)$ exists and is smooth. Moreover, for sufficiently large $t$ we also may compute that $P(t, (t-t^{-1})^2-t^{-2})$ is negative. This puts a stronger bound on $\varphi(u).$ We compute that for $x\in[0,1],$ $$B_{11, 3}(t, (t-t^{-1})^2-x) = -(x^3+4x^2+4x)t+O(t^{-1}),$$ so as $u$ approaches infinity, $t=\varphi(u)$ also approaches infinity and since $x<t^{-2}$ for sufficiently large $u, t$ we get $B_{11, 3}(\varphi(u), u) = O(t^{-1}).$ Hence, letting $n:(0, \infty)\to\mathbb{R}$ be determined by $$\varphi^{n(u)-4}(u)(\varphi^2(u)+\varphi^{-2}(u)-u-2)B_{11, 3}^2(t, u) = 1,$$ this is a continuous function as before, and $n(0)=0$ by direct computation. We have computed that $\varphi^2(u)+\varphi^{-2}(u)-u-2 = O(\varphi^{-2}(u))$ as $u\to\infty,$ therefore, with the estimate of $B_{11,3},$ we get $\liminf_{u\to\infty} n(u) \geq 8.$ Since $n(u)$ is continuous, it assumes every value in the interval $[0, 8).$\end{proof}

Note that in the above 2 theorems we have proven actually a bit more. For the negative slopes, we have a continuous family of representations $\rho_t:\pi_1(S^3 - K)\times[t_{\text{min}}, \infty)\to SL(2, \mathbb{R})$ such that for each slope $r\in(-4, 0]\cap\mathbb{Q}$ there exists $t(r)$ such that $\rho_{t(r)}(\mu^p\lambda^q) = 1.$ In fact, we know even more than this. Each $t$ in this representation $\rho_t$ is $>1,$ hence if $x=\mu\in\pi_1(S^3 - K)$ is a meridian then $\rho_t(\mu) = \begin{pmatrix} t & 1 \\ 0 & t^{-1} \\ \end{pmatrix}$ is clearly hyperbolic. Hence, the restriction of each representation to the peripheral subgroup $\langle\mu, \lambda\rangle$ (which is abelian) is hyperbolic.

The analogous result holds for the positive surgeries. Observing that these two continuous families intersect at $(t, u)=(t_{\text{min}}, 0),$ we can reparameterize the representations as follows. We get a continuous family of representations $\rho_\tau:\pi_1(S^3 - K)\times(-1, 1)\to SL(2, \mathbb{R})$ such that for each slope $r\in(-4, 8)\cap\mathbb{Q}$ there exists $\tau(r)$ such that $\rho_{\tau(r)}(\mu^p\lambda^q) = 1,$ and such that the restriction of the representation to $\langle\mu, \lambda\rangle\subset \pi_1(S^3 - K)$ is hyperbolic.

Hence, we are ready to prove Theorem \ref{62}. The below theorem tells us a little bit more than the statement of Theorem \ref{62} in the introduction:

\begin{theorem}
For any slope $r\in(-4, 8)\cap\mathbb{Q},$ the fundamental group $\pi_1(S^3_r(K(11, 3)))$ admits a non-abelian $\widetilde{PSL}(2, \mathbb{R})$ representation, and hence it is left-orderable.
\end{theorem}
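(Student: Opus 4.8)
The plan is to assemble the two main engines developed earlier in the paper: the parameterized families of $SL(2,\mathbb{R})$-representations constructed in the two preceding theorems, and the lifting machinery of Theorem \ref{hyperbolic}. First I would invoke the reparameterized family $\rho_\tau:\pi_1(S^3-K)\times(-1,1)\to SL(2,\mathbb{R})$ recorded in the remark just above the statement. This family is continuous, non-abelian for each fixed $\tau$, and restricts to a hyperbolic representation on the peripheral subgroup $\langle\mu,\lambda\rangle$; moreover it comes equipped with a continuous function $f:(-1,1)\to\mathbb{R}$ (built from the surgery-slope function $n(t)$) such that $f(\tau)=p/q$ forces $\rho_\tau(\mu^p\lambda^q)=\mathrm{Id}$, and $f$ surjects onto $(-4,8)$.

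The second step is to verify that this family satisfies the hypotheses of Theorem \ref{hyperbolic}, and in particular that $f$ attains a value of the form $1/n$ for some integer $n$. Since $f$ is continuous and its image contains the interval $(-4,8)$, it certainly contains $1=1/1$ (indeed it contains $1/n$ for every $n$ with $|n|\geq 1$ lying in that range), so the hypothesis that $f([0,1])$ (after rescaling the parameter interval to $[0,1]$) meets $\{1/n\}$ is met. Applying Theorem \ref{hyperbolic} then yields, for each $p/q\in(-4,8)\cap\mathbb{Q}$, a non-abelian representation $\pi_1(S^3_{p/q}(K(11,3)))\to\widetilde{PSL}(2,\mathbb{R})$.

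The final step is to upgrade non-triviality into left-orderability. Here I would apply the Boyer--Rolfsen--Wiest theorem: $\widetilde{PSL}(2,\mathbb{R})$ is left-orderable, so the image of the non-abelian representation is a nontrivial left-orderable quotient of $\pi_1(S^3_{p/q}(K(11,3)))$; since the surgery manifold is compact, orientable, and irreducible (by \cite{HatcherThurston}, as surgery on a hyperbolic knot), the theorem promotes the existence of such a quotient to left-orderability of the full group. This disposes of all nonzero slopes in the interval, and the zero slope was already handled separately by the $\mathbb{Z}$-homology observation.

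The main obstacle, and the place where care is genuinely required, is the bookkeeping in the second step connecting the two theorems. The families in the two preceding theorems are parameterized by $t$ on $(t_{\text{min}},\infty)$ (negative slopes) and by $u$ on $(0,\infty)$ (positive slopes), running the slope $n(t)$ from $0$ down toward $-4$ and from $0$ up toward $8$ respectively. I must check that these glue continuously across the shared endpoint $(t,u)=(t_{\text{min}},0)$ into a single family on a connected parameter interval, that the limiting slopes $-4$ and $8$ are genuinely \emph{not} attained (so the open interval is correct), and that the reparameterization does not destroy either the hyperbolicity of the peripheral restriction or the continuity needed for the homotopy-lifting argument in Theorem \ref{hyperbolic}. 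Once this gluing is confirmed and the surjectivity of $f$ onto the open interval is in hand, the invocation of Theorem \ref{hyperbolic} and then Boyer--Rolfsen--Wiest is essentially formal.
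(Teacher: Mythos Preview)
Your proposal is correct and follows essentially the same route as the paper. The only organizational difference is that the paper applies Theorem~\ref{hyperbolic} separately to the negative-slope family on $[t_{\min},\infty)$ and the positive-slope family on $[0,\infty)$ (noting that each hits a slope of the form $1/(\pm1)$), whereas you invoke the glued family $\rho_\tau$ on $(-1,1)$ once; since the paper already records that glued family in the discussion immediately preceding the statement, this is a cosmetic distinction rather than a different argument. One minor point: when you appeal to \cite{HatcherThurston} for irreducibility, the relevant fact is that $6_2$ is a non-torus 2-bridge knot (which is how the paper phrases it), not merely that it is hyperbolic.
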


\begin{proof}
By the above discussion, we have two continuous families of hyperbolic representations of $\pi_1(S^3 - K(11,3))$ such that for each $p/q\in(-4, 0)\cup(0, 8)$ there exists some representation that is trivial on $\mu^p\lambda^q.$ (One family is for the negative slopes, the other is positive.) Each of these families contains at least one representation that is trivial on $\mu\lambda^n$ for $n=\pm1\in\mathbb{Z},$ hence they meet the conditions of Theorem \ref{hyperbolic}. This tells us there exist non-abelian representations $\pi_1(S^3_r(K(11, 3)))\to\widetilde{PSL}(2, \mathbb{R})$ for each $r\in(-4, 0)\cup(0, 8).$ Since Hatcher and Thurston (\cite{HatcherThurston}) showed that all surgeries on non-torus 2-bridge knots are irreducible, $S^3_r(K(11, 3))$ is left-orderable for each $r\in(-4, 0)\cup(0, 8)$ by the Boyer-Rolfsen-Wiest Theorem.
\end{proof}

\section{An Infinite Family of 2-Bridge Knots}

We can extend the above result with some observations from Wang in \cite{Wang}. Recall that to each 2-bridge knot $K(p, q)$, there exists a diagrammatic representation of the knot associated to tangles given by the continued fraction expansion of $q/p.$ For the case of $6_2 = K(11, 3)$, the continued fraction of relevance is $[3, 1, 2].$ The following theorem tells us a certain construction of an infinite family of 2-bridge knots that is  group-theoretically relevant to us:

\begin{theorem}[\cite{Wang} Theorem 3.11, \cite{ORS} Theorem 6.1, Proposition 6.2]\label{nondegenerate}
Let $L$ be a 2-bridge knot with continued fraction representation $[a_1, \dots, a_n] := \mathbf{a}.$ Let $\mathbf{a}^{-1} = [-a_n, \dots, -a_1].$ Then, for integers $c_1, \dots, c_{2n} \in \mathbb{Z}$ and numbers $\epsilon_1, \dots, \epsilon_{2n+1}\in\{-1, 1\},$ consider the 2-bridge knot $K$ defined as the following continued fraction: $$[\epsilon_1\mathbf{a}, 2c_1, \epsilon_2\mathbf{a}^{-1}, 2c_2, \epsilon_3\mathbf{a}, 2c_3, \dots, \epsilon_{2n+1}\mathbf{a}^{\pm1}].$$ Then, there exists a homomorphism $\varphi:\pi_1(S^3 - K)\to\pi_1(S^3 - L)$ which preserves the peripheral subgroup, and when restricted to the peripheral subgroups is invertible. Moreover, there exist choices of meridians and longitudes $\mu_L, \mu_K, \lambda_L, \lambda_K,$ such that $\varphi(\mu_K) = \mu_L$ and $\varphi(\lambda_K) = \lambda_L^d$ where $$d = \epsilon_1 - \epsilon_2 +\epsilon_3 -\dots \pm \epsilon_{2n+1}.$$
\end{theorem}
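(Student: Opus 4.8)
The plan is to produce $\varphi$ as the map induced on fundamental groups by an explicit ``folding'' map of knot pairs $\Phi\colon(S^3,K)\to(S^3,L)$, in the spirit of Ohtsuki--Riley--Sakuma, and then to read the peripheral data off the local structure of $\Phi$. First I would fix the rational-tangle description: the continued fraction $\mathbf{a}=[a_1,\dots,a_n]$ presents $L$ as the numerator closure of a rational tangle $T(\mathbf{a})$, and $\mathbf{a}^{-1}=[-a_n,\dots,-a_1]$ presents the tangle obtained from $T(\mathbf{a})$ by the orientation-reversing involution exchanging the two pairs of endpoints. The knot $K$ with continued fraction $[\epsilon_1\mathbf{a},2c_1,\epsilon_2\mathbf{a}^{-1},\dots,\epsilon_{2n+1}\mathbf{a}^{\pm1}]$ is then assembled by stacking $2n+1$ copies of $T(\mathbf{a})$, the $i$-th carried by its sign $\epsilon_i$ and alternating between the forms $\mathbf{a}$ and $\mathbf{a}^{-1}$, separated by the integer twist regions $2c_i$. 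I would choose coordinates so that the $i$-th block occupies a ball $B_i$, with the ``output'' of $B_i$ joined to the ``input'' of $B_{i+1}$ through the $2c_i$-twist region.

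Next I would define the fold. On each block $B_i$ I let $\Phi$ be the (orientation-$\epsilon_i$) identification $B_i\to B$ of the $i$-th copy of the $\mathbf{a}$-tangle with the single $\mathbf{a}$-tangle of $L$; on the connecting $2c_i$-twist regions I let $\Phi$ collapse the full twists. This collapse is possible \emph{precisely because} the number of half-twists is even, so the two strands re-enter $B_{i+1}$ in the same cyclic configuration in which they left $B_i$ and the fold extends continuously across the junction. Gluing the pieces yields a map of pairs $\Phi\colon(S^3,K)\to(S^3,L)$ carrying $K$ onto $L$; restricting to complements and passing to $\pi_1$ gives $\varphi\colon\pi_1(S^3-K)\to\pi_1(S^3-L)$, which is automatically a well-defined homomorphism as the induced map of a continuous map (and is surjective, since $\Phi$ is a fold \emph{onto} $L$).

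To extract the peripheral statement I would argue as follows. Because $\Phi$ is a local homeomorphism near the knots, it sends a tubular neighborhood of $K$ into one of $L$, so $\varphi$ carries $\partial N(K)$ into $\partial N(L)$ and thus preserves the peripheral subgroup, sending a meridian $\mu_K$ to a meridian $\mu_L$ (local degree $\pm1$; choose orientations so the sign is $+1$, whence $\varphi$ is an $H_1$-isomorphism). For the longitude I use homology to eliminate any meridian correction: since $\lambda_K$ is null-homologous in $S^3-K$, its image $\varphi(\lambda_K)$ is null-homologous in $S^3-L$, and on $\partial N(L)$ the only null-homologous classes are powers $\lambda_L^b$ of the Seifert longitude (as $\mu_L$ generates $H_1(S^3-L)\cong\mathbb{Z}$). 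To pin down $b$ I count the signed winding of the folded longitude over $L$: block $B_i$ contributes $\epsilon_i$ weighted by the global sign $(-1)^{i-1}$ coming from the orientation reversal built into the alternation between $\mathbf{a}$- and $\mathbf{a}^{-1}$-blocks, so $b=\epsilon_1-\epsilon_2+\cdots\pm\epsilon_{2n+1}=d$, giving $\varphi(\lambda_K)=\lambda_L^d$. In the bases $(\mu,\lambda)$ the restriction $\langle\mu_K,\lambda_K\rangle\to\langle\mu_L,\lambda_L\rangle$ is then represented by $\operatorname{diag}(1,d)$, which is injective—an isomorphism onto the finite-index subgroup $\langle\mu_L,\lambda_L^d\rangle$, hence invertible over $\mathbb{Q}$—exactly when $d\neq0$.

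The hard part is two bookkeeping verifications that are easy to state and delicate to carry out. The first is checking that $\Phi$ is genuinely continuous and well-defined across the twist-region junctions—equivalently, in the purely algebraic formulation, that assigning each Wirtinger meridian of $K$ to the corresponding meridian of $L$ respects every crossing relation, including the ``absorbed'' crossings inside the $2c_i$ regions; this is exactly where evenness of each $2c_i$ is used and cannot be dropped. The second is the sign bookkeeping producing the alternating sum for $d$, which requires orienting each block and each connecting strand consistently and confirming that the orientation-reversing passage through an $\mathbf{a}^{-1}$-block contributes the minus signs in the claimed pattern. I would organize both verifications by induction on the number of blocks $2n+1$, peeling off one block together with one twist region at a time and tracking the images of the peripheral curves $\mu_K$ and $\lambda_K$ at each stage.
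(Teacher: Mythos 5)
The first thing to say is that the paper does not prove this statement at all: it is imported directly from \cite{Wang} (Theorem 3.11) and \cite{ORS} (Theorem 6.1, Proposition 6.2), so there is no in-paper proof to compare against, and the right benchmark is the Ohtsuki--Riley--Sakuma construction itself. Measured against that, your plan is the right one---it is exactly the ORS branched-fold strategy---and two of your steps are correct and complete as stated: the homological normalization of the longitude (the image of $\lambda_K$ lies in the peripheral subgroup, is null-homologous, and $\mu_L$ generates $H_1(S^3-L)\cong\mathbb{Z}$, so the $\mu_L$-exponent vanishes and $\varphi(\lambda_K)=\lambda_L^{b}$ with no meridional correction), and the identification of the peripheral restriction with $\mathrm{diag}(1,d)$, invertible over $\mathbb{Q}$. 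One small improvement: you need not condition on $d\neq 0$, since $d$ is a sum of an odd number of signs $\pm 1$, hence odd and automatically nonzero---the paper records exactly this in the remark following the theorem.

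However, as a proof your write-up has a genuine gap, which you half-acknowledge yourself: the two ``bookkeeping verifications'' deferred at the end are not bookkeeping---they are the entire mathematical content of the theorem, namely the construction of the continuous map across the $2c_i$-twist junctions and the computation of the longitude image (Sections 5--6 of \cite{ORS}, or the algebraic version in \cite{Wang}). In particular, ``let $\Phi$ collapse the full twists'' is not yet a definition of a map of pairs $(S^3,K)\to(S^3,L)$: collapsing a twist region forces strands through one another unless the map is permitted to be a branched fold, and the evenness of $2c_i$ enters through a specific wrapping of the fold around the branch locus, which must be exhibited rather than asserted. Relatedly, your justification that ``$\Phi$ is a local homeomorphism near the knots'' is false at the fold junctions, where the restriction $K\to L$ is locally two-to-one (of the form $t\mapsto |t|$); the meridian conclusion survives because one can take $\mu_K$ based at a generic point, or argue algebraically that each Wirtinger generator of $\pi_1(S^3-K)$ maps to a conjugate of $\mu_L^{\pm 1}$ as ORS do, but as written the stated reason is wrong. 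Finally, the signed winding count giving $d=\epsilon_1-\epsilon_2+\cdots\pm\epsilon_{2n+1}$ presupposes that the image of $\lambda_K$ can be read block-by-block with alternating signs; in the cited sources this is an explicit word computation in the Wirtinger presentation, not a soft degree argument, and your induction-on-blocks scheme would in effect have to reproduce it. In summary: correct approach and correct soft steps, but the core of the cited proof is deferred rather than supplied.
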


\begin{remark}
Note in particular that $d\neq0$ since $d$ is odd. 
\end{remark}

What is particularly convenient to us is that this strange-looking condition on the knot complement groups is also associated to a left-orderability condition:

\begin{theorem}[from the proof of \cite{Wang} Proposition 3.2]\label{nondegeneratelo}
Suppose that $K, L\subset S^3$ are nontrivial knots and $\varphi:\pi_1(S^3 - K)\to\pi_1(S^3 - L)$ is a homomorphism preserving peripheral subgroups. Choosing meridians and longitudes $\mu_L, \mu_K, \lambda_L, \lambda_K,$ we may express $\varphi,$ restricted to the peripheral subgroups, as a matrix $\begin{pmatrix} a & b \\ c & d \\ \end{pmatrix}.$ Let $p/q\in\mathbb{Q}$ be a fraction in lowest common terms. Then, if $\pi_1(S^3_{ap/({bp+dq})}(L))$ is left-orderable and $S^3_{p/q}(K)$ is an irreducible 3-manifold, then $\pi_1(S^3_{p/q}(K))$ is left-orderable.
\end{theorem}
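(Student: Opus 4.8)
The plan is to set up the left-ordering on $\pi_1(S^3_{p/q}(K))$ by transporting the known left-ordering from a suitable surgery on $L$ through the homomorphism $\varphi$. First I would unpack what it means for $\varphi$ to preserve peripheral subgroups and act as the matrix $\begin{pmatrix} a & b \\ c & d \end{pmatrix}$: in the chosen bases, $\varphi(\mu_K) = \mu_L^a \lambda_L^c$ and $\varphi(\lambda_K) = \mu_L^b \lambda_L^d$ (up to the conventions fixing rows vs.\ columns). The key computation is to track where the surgery element $\mu_K^p \lambda_K^q$ goes: applying $\varphi$ gives $\varphi(\mu_K^p\lambda_K^q) = \mu_L^{ap+bq}\lambda_L^{cp+dq}$. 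The claim is that this lands in the peripheral subgroup of $L$ as a power of the surgery element $\mu_L^{a'}\lambda_L^{b'}$ defining the slope $ap/(bp+dq)$; one needs to check that $(ap+bq)$ and $(cp+dq)$ reduce, after clearing the common factor, to the numerator and denominator of the slope $ap/(bp+dq)$, which explains the precise form of the slope in the statement.

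Next I would exploit the hypothesis that $\pi_1(S^3_{ap/(bp+dq)}(L))$ is left-orderable. Since $\varphi$ sends $\mu_K^p\lambda_K^q$ into the normal closure generating the relevant surgery relation on $L$, it descends to a homomorphism $\bar\varphi: \pi_1(S^3_{p/q}(K)) \to \pi_1(S^3_{ap/(bp+dq)}(L))$. The target is left-orderable by hypothesis, so if $\bar\varphi$ were injective we could simply pull back the left-ordering. In general $\bar\varphi$ need not be injective, so instead I would invoke the Boyer--Rolfsen--Wiest theorem (the one quoted early in the excerpt): it suffices to produce a \emph{nontrivial} left-orderable quotient of $\pi_1(S^3_{p/q}(K))$. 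The image $\bar\varphi(\pi_1(S^3_{p/q}(K)))$ is such a quotient provided it is nontrivial, and its left-orderability is inherited as a subgroup of a left-orderable group (left-orderability passes to subgroups). The irreducibility hypothesis on $S^3_{p/q}(K)$ is exactly what licenses the application of Boyer--Rolfsen--Wiest, since that theorem requires a compact, irreducible, orientable $3$-manifold.

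The main obstacle I expect is establishing that the induced quotient $\bar\varphi(\pi_1(S^3_{p/q}(K)))$ is nontrivial, i.e.\ that $\bar\varphi$ does not collapse everything to the trivial group. This is where the hypotheses that $K$ and $L$ are nontrivial knots and that $\varphi$ is invertible on peripheral subgroups become essential: invertibility on the boundary tori guarantees that $\varphi$ is nonzero on the meridian $\mu_K$, so that $\bar\varphi(\mu_K)$ is a nontrivial element of the target surgery group (its image $\mu_L^a\lambda_L^c$ survives in the surgered group because $H_1$ of a surgery on a nontrivial knot detects the meridian). I would verify this by checking that $\varphi(\mu_K) = \mu_L^a\lambda_L^c$ maps to a nonzero class in $H_1(S^3_{ap/(bp+dq)}(L); \mathbb{Z})$, or alternatively that the image is nontrivial directly from the nondegeneracy of the peripheral matrix; the determinant $ad - bc = \pm 1$ coming from invertibility ensures $a$ and $c$ are not both zero. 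Once nontriviality is secured, the rest is the formal argument above, and the delicate bookkeeping lies entirely in matching the slope $ap/(bp+dq)$ to the image of the surgery element under the peripheral matrix.
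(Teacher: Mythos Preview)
The paper does not actually supply a proof of this theorem: it is quoted verbatim as coming ``from the proof of \cite{Wang} Proposition 3.2'' and is used as a black box in Section~5. So there is no in-paper argument to compare against. That said, your outline---descend $\varphi$ to a homomorphism between the surgered groups and then invoke the Boyer--Rolfsen--Wiest theorem using the image as a nontrivial left-orderable quotient---is exactly the standard argument and is surely what Wang does.

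Two points are worth tightening. First, the slope formula $ap/(bp+dq)$ in the statement is not an accident of convention: since $\lambda_K$ is null-homologous in $S^3-K$, its image $\varphi(\lambda_K)$ lies in the commutator subgroup of $\pi_1(S^3-L)$, and the intersection of the commutator subgroup with the peripheral torus is generated by $\lambda_L$. Hence the ``$c$'' entry (the $\mu_L$-exponent of $\varphi(\lambda_K)$) is forced to be zero, and $\varphi(\mu_K^p\lambda_K^q)=\mu_L^{ap}\lambda_L^{bp+dq}$ on the nose. You should make this explicit rather than leaving it as a convention issue.

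Second, your nontriviality argument has a genuine gap. You appeal to ``invertibility on peripheral subgroups'' and to $ad-bc=\pm 1$, but neither is a hypothesis of the stated theorem (and even in the companion Theorem~\ref{nondegenerate} the peripheral matrix is $\begin{pmatrix}1&0\\0&d\end{pmatrix}$ with determinant $d$, not $\pm 1$). Moreover your $H_1$ check can fail: when $p=\pm 1$ the target surgery may be an integral homology sphere, so $H_1$ detects nothing. A cleaner route in the intended application is to observe that $a=1$ there, so $\varphi(\mu_K)=\mu_L\lambda_L^{b}$; since $\mu_L$ normally generates $\pi_1(S^3-L)$, if $\bar\varphi(\mu_K)$ were trivial then so would be the entire target group, contradicting the hypothesis that it is (nontrivially) left-orderable. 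For the theorem as literally stated (general $a,b,d$), you would need to argue more carefully---or acknowledge that some nondegeneracy of the peripheral matrix is implicitly being assumed, as Wang does.
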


These lovely theorems tell us that we can construct an infinite family of 2-bridge knots on which we know a left-orderable interval of surgeries. Let $\mathbf{a} = [3,1,2]$ and $\mathbf{a}^{-1} = [-2, -1, -3].$ For integers $\mathbf{c} = (c_1, \dots, c_{2n}) \in \mathbb{Z}^{2n}$ and numbers $\boldsymbol{\epsilon} = \epsilon_1, \dots, \epsilon_{2n+1}\in\{-1, 1\}^{2n+1},$ let $K(\mathbf{c}, \boldsymbol{\epsilon})$ be the 2-bridge knot defined as the following continued fraction: $$[\epsilon_1\mathbf{a}, 2c_1, \epsilon_2\mathbf{a}^{-1}, 2c_2, \epsilon_3\mathbf{a}, 2c_3, \dots, \epsilon_{2n+1}\mathbf{a}^{\pm1}].$$ By the uniqueness of continued fraction representations,  $K(\mathbf{c}, \boldsymbol{\epsilon})$ will never be a torus knot (as torus knots are $K(2n+1, 1).$) Then, Theorem \ref{nondegenerate} tells us that there exists a homomorphism $\varphi: \pi_1(S^3 - K(\mathbf{c}, \boldsymbol{\epsilon}))\to \pi_1(S^3 - 6_2)$ which preserves the peripheral subgroup. Under suitable choices of meridians and longitudes, we may write the restriction of $\varphi$ to the peripheral subgroups as the matrix $\begin{pmatrix} 1 & 0 \\ 0 & d \\ \end{pmatrix}.$ Hence, Theorem \ref{nondegeneratelo} tells us that $\pi_1(S^3_{p/q}(K(\mathbf{c}, \boldsymbol{\epsilon})))$ is left-orderable whenever the manifold $S^3_{p/q}(K(\mathbf{c}, \boldsymbol{\epsilon}))$ is irreducible and when $p/dq$ is a left-orderable slope for the knot $6_2.$ We know that all surgeries on non-torus 2-bridge knots are irreducible by \cite{HatcherThurston}, and since $|d|\geq1,$ Theorem \ref{62} tells us that $p/dq$ is a left-orderable slope for $6_2$ whenever $p/q\in(-4, 4)$, thus we have the following corollary:

\begin{corollary}
Let $\mathbf{c} = (c_1, \dots, c_{2n}) \in \mathbb{Z}^{2n}$ and $\boldsymbol{\epsilon} = \epsilon_1, \dots, \epsilon_{2n+1}\in\{-1, 1\}^{2n+1}.$ Let $d = \epsilon_1 - \epsilon_2 +\epsilon_3 -\dots \pm \epsilon_{2n+1}.$ \begin{itemize}
	\item If $d>0,$ then $\pi_1(S^3_{p/q}(K(\mathbf{c}, \boldsymbol{\epsilon})))$ is left-orderable for all $p/q\in(-4d, 8d)\cap\mathbb{Q}.$
	\item If $d<0,$ then $\pi_1(S^3_{p/q}(K(\mathbf{c}, \boldsymbol{\epsilon})))$ is left-orderable for all $p/q\in(-8|d|, 4|d|)\cap\mathbb{Q}.$
\end{itemize}
In particular, $\pi_1(S^3_{p/q}(K(\mathbf{c}, \boldsymbol{\epsilon})))$ is left-orderable for all $p/q\in(-4, 4)\cap\mathbb{Q}.$
\end{corollary}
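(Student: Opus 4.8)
The plan is to combine Theorem \ref{nondegenerate} and Theorem \ref{nondegeneratelo} mechanically, feeding in the interval for $6_2$ established in Theorem \ref{62}. First I would invoke Theorem \ref{nondegenerate} with $L = 6_2 = K(11,3)$ and $\mathbf{a} = [3,1,2]$ to obtain, for each choice of $\mathbf{c}$ and $\boldsymbol{\epsilon}$, a homomorphism $\varphi:\pi_1(S^3 - K(\mathbf{c}, \boldsymbol{\epsilon}))\to\pi_1(S^3 - 6_2)$ preserving peripheral subgroups, with the peripheral restriction sending $\mu_K\mapsto\mu_L$ and $\lambda_K\mapsto\lambda_L^d$ where $d = \epsilon_1 - \epsilon_2 + \dots \pm\epsilon_{2n+1}$. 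In the notation of Theorem \ref{nondegeneratelo}, this restriction is the matrix $\begin{pmatrix} 1 & 0 \\ 0 & d \end{pmatrix}$, so $a = 1$, $b = 0$, and the target surgery slope becomes $ap/(bp + dq) = p/(dq)$.

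Next I would apply Theorem \ref{nondegeneratelo} directly: writing $p/q$ in lowest terms, it tells us $\pi_1(S^3_{p/q}(K(\mathbf{c}, \boldsymbol{\epsilon})))$ is left-orderable provided $\pi_1(S^3_{p/(dq)}(6_2))$ is left-orderable and $S^3_{p/q}(K(\mathbf{c}, \boldsymbol{\epsilon}))$ is irreducible. The irreducibility is handled uniformly by \cite{HatcherThurston}, since $K(\mathbf{c}, \boldsymbol{\epsilon})$ is a non-torus 2-bridge knot (the continued fraction $[\epsilon_1\mathbf{a}, 2c_1, \dots]$ is never of the torus-knot form $[2n+1]$). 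It remains to determine for which $p/q$ the slope $p/(dq)$ lands in the left-orderable interval $(-4, 8)$ for $6_2$ furnished by Theorem \ref{62}. When $d > 0$, the condition $-4 < p/(dq) < 8$ is equivalent to $-4d < p/q < 8d$, and when $d < 0$ the inequalities reverse, giving $-8|d| < p/q < 4|d|$; intersecting these with the common range, both always contain $(-4, 4)$ since $|d|\geq 1$ (indeed $d$ is odd and hence nonzero, by the remark following Theorem \ref{nondegenerate}).

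The only subtlety worth a sentence of care is the sign bookkeeping in the last step: one must track that the left-orderable interval $(-4, 8)$ is asymmetric, so dilating by $d$ and possibly reflecting (when $d < 0$) yields the two distinct cases in the corollary rather than a single symmetric interval. This is the sole place where anything can go wrong, and it is purely a matter of multiplying the endpoints $-4$ and $8$ by $d$ and orienting the resulting interval correctly. Everything else is a direct citation, so I expect no genuine obstacle; the statement follows immediately once the matrix entries $a = 1$, $b = 0$ are identified and the interval-scaling is carried out.
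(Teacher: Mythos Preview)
Your proposal is correct and follows essentially the same route as the paper: invoke Theorem \ref{nondegenerate} to get the peripheral matrix $\begin{pmatrix}1&0\\0&d\end{pmatrix}$, apply Theorem \ref{nondegeneratelo} so that left-orderability of $S^3_{p/q}(K(\mathbf{c},\boldsymbol{\epsilon}))$ reduces to left-orderability of $S^3_{p/(dq)}(6_2)$, cite \cite{HatcherThurston} for irreducibility and the non-torus observation, and then scale the interval $(-4,8)$ from Theorem \ref{62} by $d$. The paper carries out exactly this argument in the paragraph preceding the corollary rather than in a separate proof block, and your handling of the sign bookkeeping for $d<0$ is slightly more explicit than the paper's, but there is no substantive difference.
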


\section{Conclusion}

Theorems \ref{2bridge} and \ref{hyperbolic} proved above give some hope of showing left-orderability of the fundamental groups of many more manifolds obtained by surgery on 2-bridge knots. I have demonstrated here the power of these methods by extending the range of intervals particularly for the knot $6_2.$ This used ad hoc methods of estimating the solutions to the polynomials $P(t, u)$ and $Q(t, u)$ with the implicit function theorem and intermediate value theorem. However, I hope that these methods can be used alongside properties of the polynomials $P$ and $Q$ to extract more information in the general case.

This method is, however, lacking in two key areas. The first is that it provides little hope of extending beyond 2-bridge knots or other knots with similar  (for that matter, it is extremely unclear how the information about whether a 2-bridge knot is $L$-space enters into this entire cycle of ideas we have been discussing in most of the last two chapters.) 

The more dire problem is that this method relies on exhibiting a non-trivial $\widetilde{PSL}(2, \mathbb{R})$-representation to prove left-orderability. However, there is no guarantee that a left-orderable manifold admits a non-trivial representation of its fundamental group to $\widetilde{PSL}(2, \mathbb{R}).$ In fact, Gao has found an infinite family of integral homology spheres that are non-$L$-spaces in \cite{Gao}. Perhaps these spaces may provide counterexamples to the $L$-space conjecture; alternatively, they are left-orderable but the proof is more complicated than we would hope.

\bibliographystyle{abbrv} 
\bibliography{bib}

\end{document}